\setlist[enumerate]{leftmargin=.5in}
\setlist[itemize]{leftmargin=.5in}
\definecolor{darkred}{rgb}{.7,0,0}
\definecolor{darkgreen}{rgb}{0,0.7,0}
\definecolor{darkblue}{rgb}{0,0,0.7}
\newcommand{\bbP}{\mathbb{P}}
\crefname{hypothesis}{Hypothesis}{Hypotheses}
\newcommand{\be}{\begin{equation}}
\newcommand{\ee}{\end{equation}}
\newcommand{\beqas}{\begin{eqnarray*}}
\newcommand{\eeqas}{\end{eqnarray*}}
\newcommand{\dee}{\mathrm{d}}
\newcommand{\mC}{\mathsf{C}}
\newcommand{\mGamma}{\mathsf{\Gamma}}
\newcommand{\mI}{\mathsf{I}}
\newcommand{\iid}{\overset{\mathrm{iid}}{\sim}}
\newcommand{\eqd}{\overset{\mathrm{d}}{=}}
\newcommand{\cX}{\mathcal{X}}
\newcommand{\JC}{\mathsf{J}_{\mathsf{C}}}
\newcommand{\JNC}{\mathsf{J}_{\mathsf{NC}}}
\newcommand{\JE}{\mathsf{J}_{\mathsf{E}}}
\newcommand{\IC}{\mathsf{I}_{\mathsf{C}}}
\newcommand{\INC}{\mathsf{I}_{\mathsf{NC}}}
\newcommand{\JCN}{\mathsf{J}_{\mathsf{C}}^{N,\gamma}}
\newcommand{\JNCN}{\mathsf{J}_{\mathsf{NC}}^{N,\gamma}}
\newcommand{\JEN}{\mathsf{J}_{\mathsf{E}}^{N,\gamma}}
\newcommand{\gN}{{\gamma_N}}
\newcommand{\JCNN}{\mathsf{J}_{\mathsf{C}}^{N,\gN}}
\newcommand{\JNCNN}{\mathsf{J}_{\mathsf{NC}}^{N,\gN}}
\newcommand{\JENN}{\mathsf{J}_{\mathsf{E}}^{N,\gN}}
\newcommand{\thetaCN}{\theta_{\mathsf{C}}^{N}}
\newcommand{\thetaNCN}{\theta_{\mathsf{NC}}^{N}}
\newcommand{\thetaEN}{\theta_{\mathsf{E}}^{N}}
\newcommand{\wZ}{\bar Z}
\newcommand{\wX}{\bar X}
\newcommand{\wmu}{\bar \mu}
\newcommand{\eqc}{\propto}
\def \N {{\mathbb N}}
\def \R {{\mathbb R}}
\def \P {{\mathbb P}}
\def \la {{\langle}}
\def \ra {{\rangle}}
\def \eps {{\varepsilon}}
\DeclareMathOperator*{\argmin}{arg\,min}
\numberwithin{theorem}{section}
\newcommand{\TheTitle}{Hyperparameter Estimation in Bayesian MAP
Estimation:\\Parameterizations and Consistency} 
\newcommand{\TheAuthors}{M. M. Dunlop, T. Helin and A. M. Stuart}
\title{{\TheTitle}}
\author{
  Matthew M. Dunlop\thanks{Courant Institute of Mathematical Sciences, New York University, New York, New York, 10012, USA (\email{matt.dunlop@nyu.edu}).}
  \and
  Tapio Helin\thanks{School of Engineering Science, Lappeenranta University of Technology, Lappeenranta, 53850,
Finland (\email{tapio.helin@lut.fi}).} 
  \and
  Andrew M. Stuart\thanks{Computing \& Mathematical Sciences, California Institute of Technology, Pasadena, California, 91125, USA  (\email{astuart@caltech.edu}).}
}
\begin{document}

\maketitle

% REQUIRED
\begin{abstract}
The Bayesian formulation of inverse problems is attractive for three primary reasons: it provides a clear modelling framework; means for uncertainty quantification; and it allows for principled learning of hyperparameters. The posterior distribution may be explored by sampling methods, but for many problems it is computationally infeasible to do so. In this situation maximum a posteriori (MAP) estimators are often sought. Whilst these are relatively cheap to compute, and have an attractive variational formulation, a key drawback is their lack of invariance under change of parameterization. This is a particularly significant issue when hierarchical priors are employed to learn hyperparameters. In this paper we study the effect of the choice of parameterization on MAP estimators when a conditionally Gaussian hierarchical prior distribution is employed. Specifically we consider the centred parameterization, the natural parameterization in which the unknown state is solved for directly, and the noncentred parameterization, which works with a whitened Gaussian as the unknown state variable, and arises when considering dimension-robust MCMC algorithms; MAP estimation is well-defined in the nonparametric setting only for the noncentred parameterization. However, we show that MAP estimates based on the noncentred parameterization are not consistent as estimators of hyperparameters; conversely, we show that limits of finite-dimensional centred MAP estimators are consistent as the dimension tends to infinity. We also consider empirical Bayesian hyperparameter estimation, show consistency of these estimates, and demonstrate that they are more robust with respect to noise than centred MAP estimates. An underpinning concept throughout is that hyperparameters may only be recovered up to measure equivalence, a well-known phenomenon in the context of the Ornstein-Uhlenbeck process. 

% The applicability of the 
%results is demonstrated concretely with the study of hierarchical 
%Whittle--Mat\'ern and ARD priors.
\end{abstract}

% REQUIRED
\begin{keywords}
Bayesian inverse problems, hierarchical Bayesian, MAP estimation, 
optimization, nonparametric inference, hyperparameter inference,
consistency of estimators.
\end{keywords}

% REQUIRED
\begin{AMS}
62G05, 62C10, 62G20, 45Q05
\end{AMS}

\section{Introduction}
Let $X,Y$ be separable Hilbert spaces, and let $A:X\to Y$ be a linear map. We consider the problem of recovering a state $u \in X$ from observations $y \in Y$ given by
\begin{align}
\label{eq:data}
y = Au + \eta,\quad \eta \sim N(0,\mGamma)
\end{align}
where $\eta$ is random noise corrupting the observations. This is an example of a linear inverse problem, with the mapping $u\mapsto Au$ being the corresponding forward problem. In the applications we consider, $X$ is typically be an infinite-dimensional space of functions, and $Y$ a finite-dimensional Euclidean space $\R^J$.

Our focus in this paper is on the Bayesian approach to this inverse problem.
We view $y,u,\eta$ as random variables, assume that $u$ and $\eta$
are {\em a priori} independent with known distributions $\bbP(\dee u)$ 
and $\bbP(\dee\eta)$, and seek the posterior distribution $\bbP(\dee u|y).$
Bayes' theorem then states that
$$\bbP(\dee u|y) \propto \bbP(y|u)\bbP(\dee u).$$
In the hierarchical Bayesian approach the prior depends on hyperparameters $\theta$
which are appended to the state $u$ to form the unknown. The prior on 
$(u,\theta)$ is factored as $\bbP(\dee u,\dee\theta)=\bbP(\dee u|\theta)\bbP(\dee\theta)$
and Bayes' theorem states that
$$\bbP(\dee u,\dee\theta|y) \propto \bbP(y|u)\bbP(\dee u|\theta)\bbP(\dee\theta).$$
In this paper we study conditionally Gaussian priors in which
$\bbP(\dee u|\theta)$ is a Gaussian measure for every fixed $\theta.$

Centred methods work directly with $(u,\theta)$ as unknowns, whilst
noncentred methods work with $(\xi,\theta)$ where $u=\sqrt{C(\theta)} \xi$
and $\xi$ is, {\em a priori}, a Gaussian white noise; thus $C(\theta)$
is the covariance of $u|\theta.$ In the context of MCMC methods the
use of noncentred variables has been demonstrated to confer considerable
advantages. However the key message of this paper is 
that, when studying maximum a posteriori (MAP) estimation,
and in particular consistency of learning 
hyperparameters $\theta$ in the data-rich limit, centred parameterization is
preferable to noncentred parameterization.

\subsection{Literature Review}

The Bayesian approach is a fundamental and
underpinning framework for statistical inference \cite{berger2013statistical}. 
In the last decade it has started to become a practical computational tool for 
large scale inverse problems \cite{kaipio2006statistical}, realizing
an approach to ill-posed inverse problems introduced in the
1970 paper \cite{franklin1970well}. The subject has developing
mathematical foundations and attendant stability and approximation theories 
\cite{dashti2016bayesian,lasanen2012non,lasanen2012nonb,S10a,owhadi2015brittleness}. Furthermore,
the subject of Bayesian posterior consistency is being systematically developed 
\cite{agapiou2013posterior,agapiou2014bayesian,nickl2017bernstein,nickl2017bernsteinb,
knapik2011bayesian,knapik2013bayesian,ray2013bayesian,gugushvili2018a,gugushvili2018b}. Furthermore,
the paper \cite{knapik2016bayes} was the first to establish consistency
in the context of hyperparameter learning, as we do here, and in doing
so demonstrates that Bayesian methods have comparable capabilities
to frequentist methods, regarding adaptation to smoothness,
whilst also quantifying uncertainty. We comment further on the relationship
of our work to \cite{knapik2016bayes} in more detail later in the paper,
once the needed framework has been established.

For some problems it is still beyond reach to perform posterior sampling
via MCMC or SMC methods. For this reason maximum a posterior (MAP) estimation,
which provides a point estimator of the unknown and has a variational 
formulation, remains an important practical computational 
tool \cite{kaipio2006statistical}. Furthermore MAP estimation links
Bayesian inference with optimization approaches to inversion, and
allows for the possibility of new optimization methods informed by the
Bayesian perspective. As a consequence there is also a developing 
mathematical theory around
MAP estimation for Bayesian ill-posed inverse problems, relating to both
how to define a MAP estimator in the infinite dimensional setting
\cite{agapiou2018sparsity,clason2018generalized,dashti2013map,helin2015maximum,helin2010hierarchical}, and 
to the subject of posterior consistency of MAP estimators
\cite{agapiou2018posterior,agapiou2018rates,dashti2013map,nickl2018nonparametric,nickl2018convergence}.

The focus of this paper is hierarchical Bayesian inversion
with Gaussian priors such as the Whittle--Mat{\'e}rn and ARD priors. 
See \cite{roininen2014whittle,neal1995bayesian} for references to the literature in this
area. In this context the question of centred versus noncentred
parameterization is important in defining the problem 
\cite{papaspiliopoulos2007general}. This choice also has significant
ramifications for algorithms: there are many examples of settings in
which the noncentred approach is preferable to the centred approach
within the context of
Gibbs-based MCMC sampling \cite{agapiou2014analysis,DIS16,roberts2001inference,yu2011center} and even non-Bayesian methods such as ensemble Kalman 
inversion \cite{chada2018parameterizations}. 
Nonetheless, in the context of MAP estimation, we demonstrate in this
paper that the message is rather different: centred methods are preferable.

\subsection{Our Contribution}

The primary contributions of this paper are as follows:

\vspace{0.1in}

\begin{itemize}
\item We demonstrate that, for MAP estimation,
centred parameterizations are preferable to
noncentred parameterizations when a goal of the inference is recovery  
of the hyperparameters $\theta.$
We provide conditions on the data model and prior distribution that 
lead to theorems describing the recovery, or lack of recovery, of the true hyperparameters in the simultaneous large data/small noise limit.
\item We extend the theory to empirical Bayesian estimation of hyperparameters;
we also demonstrate additional robustness that this method has over the 
centred parameterization.
\item We demonstrate the precise sense in which hyperparameter recovery 
holds only up to measure equivalence.
\end{itemize}

\vspace{0.1in}

In \cref{sec:BIP} we introduce the Bayesian setting in which we
work, emphasizing hierarchical Gaussian priors and describing the
centred and noncentred formulations. 
In \cref{sec:PE} we review the concept of MAP estimation in a 
general setting, describing issues associated with working directly in 
infinite dimensions, and discussing different choices of parameterization. 
\Cref{sec:consistency} contains the theoretical results
concerning consistency of hyperparemeter estimation, setting up the
data-rich scenario, and studying the properties of hyperparameter
estimators for the centred, noncentred and empirical Bayes settings;
we show in particular the applicability of the theory to the case of hierarchical Whittle--Mat\'ern priors and Automatic Relevance Determination (ARD) priors.
In \cref{sec:NE} numerical results are given which illustrate
the foregoing theory. In \cref{sec:C} we conclude. 
Some lemmas required in the analysis are given in an appendix.

\section{Bayesian Inverse Problems}
\label{sec:BIP}
In this section we introduce the Bayesian hierarchical approach to the solution
of inverse problems of the form considered in the introduction. In
\cref{ssec:BT} we describe the construction of the posterior 
distribution using Bayes' theorem in both finite and infinite dimensions,
including a discussion of the Gaussian priors that we use  
when discussing consistency of MAP estimators.
\Cref{ssec:HI} is devoted to hierarchical priors, centred versus
noncentred parameterization and sampling methods associated with the 
different choice of hierarchical parameterization; this sets the context
for our results comparing MAP estimation with centred and 
noncentred parameterizations. 

\subsection{Bayes' Theorem}
\label{ssec:BT}

We describe the likelihood and posterior arising from a Bayesian
treatment of the inverse problem of interest, in the setting of Gaussian
random field priors.

\subsubsection{Gaussian Random Process Priors}
In this paper we focus on the case where the prior is (in the hierarchical
case, conditionally,) Gaussian.
Recall that probability measure $\mu_0$ on $X$ is a Gaussian measure 
if\footnote{Given a measure $\mu$ on $X$ and a measurable map $T:X\to X'$, $T^\sharp\mu_0$ denotes the \emph{pushforward measure} on $X'$, defined by $(T^\sharp\mu)(A) = \mu(T^{-1}(A))$ for all measurable $A\subseteq X'$.} $\ell^\sharp\mu_0$ is a Gaussian measure on $\R$ for any bounded linear $\ell:X\to\R$; equivalently, $\mu_0$ is a Gaussian measure if $u \sim \mu_0$ implies that $\ell(u)$ is a Gaussian random variable on $\R$ for any such $\ell$. If $X$ is a space of functions on a domain $D\subseteq\R^d$, a random variable $u$ on $X$ with law $\mu_0$ is referred to as a Gaussian process on $D$.\footnote{Also sometimes termed a Gaussian random field.} Such a Gaussian process is characterized completely by its mean function $m:D\to\R$ and covariance function $c:D\times D\to \R$:
\begin{align*}
m(x) &= \mathbb{E}^{\mu_0}(u(x))\quad\text{for all }x \in D,\\
c(x,x') &= \mathbb{E}^{\mu_0}(u(x)-m(x))(u(x')-m(x'))\quad\text{for all } x,x' \in D.
\end{align*}
Equivalently, it is characterized by its mean $m \in X$ and covariance operator $C:X\to X$,
\[
m = \mathbb{E}^{\mu_0}(u),\quad C = \mathbb{E}^{\mu_0}(u-m)\otimes(u-m).
\]
When $X = L^2(D)$, the covariance function is related to the covariance operator by
\[
(C\varphi)(x) = \int_D c(x,x')\varphi(x')\,\dee x'\quad\text{for all }\varphi \in X, x \in D,
\]
that is, $C$ is the integral operator with kernel $c$. In particular, if $C$ is the inverse of a differential operator, $c$ is the Green's function for that operator.
We now detail a number of Gaussian processes that arise as examples 
throughout the paper.

\begin{figure}
\centering
\includegraphics[width=0.32\linewidth]{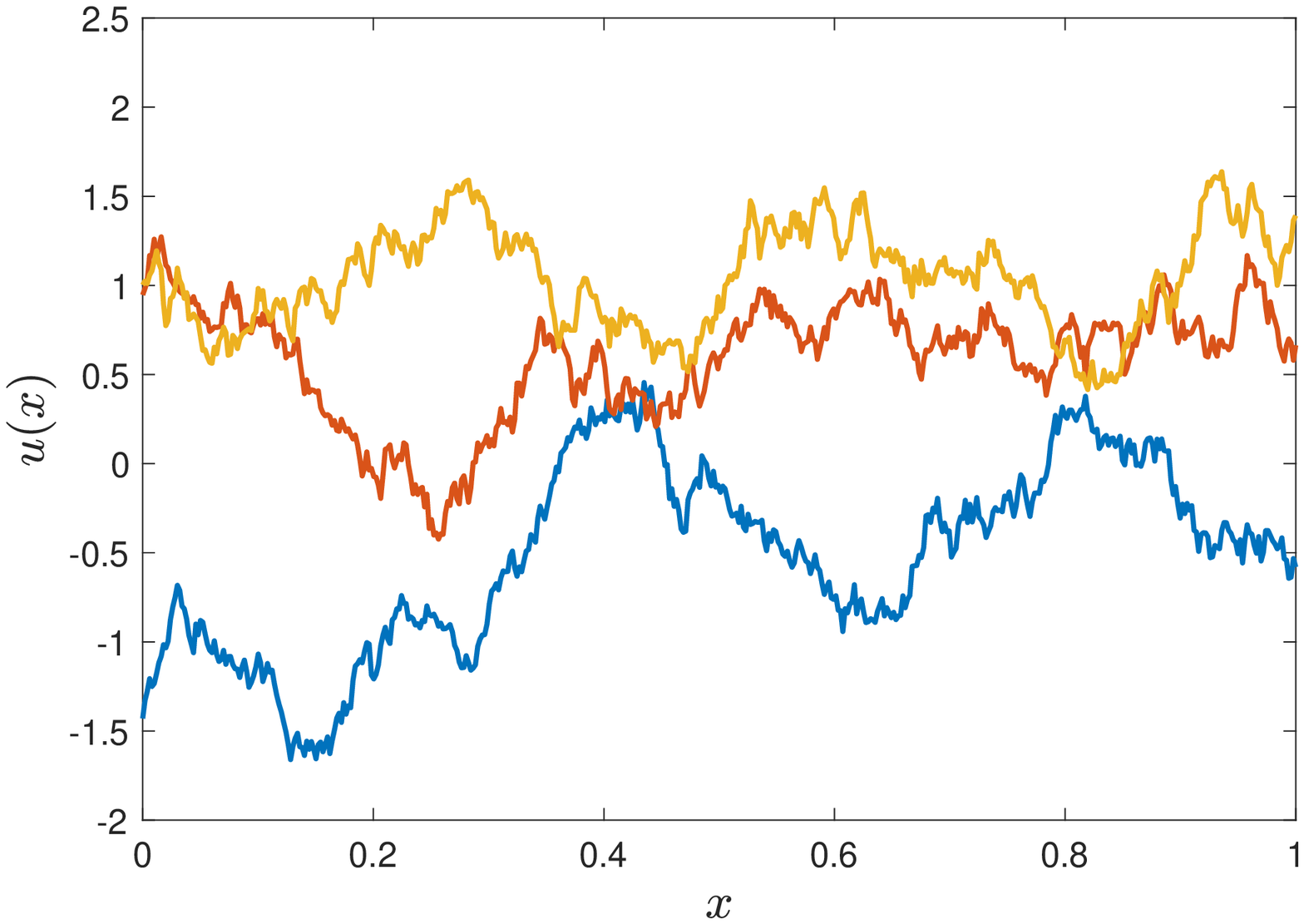}
\includegraphics[width=0.32\linewidth]{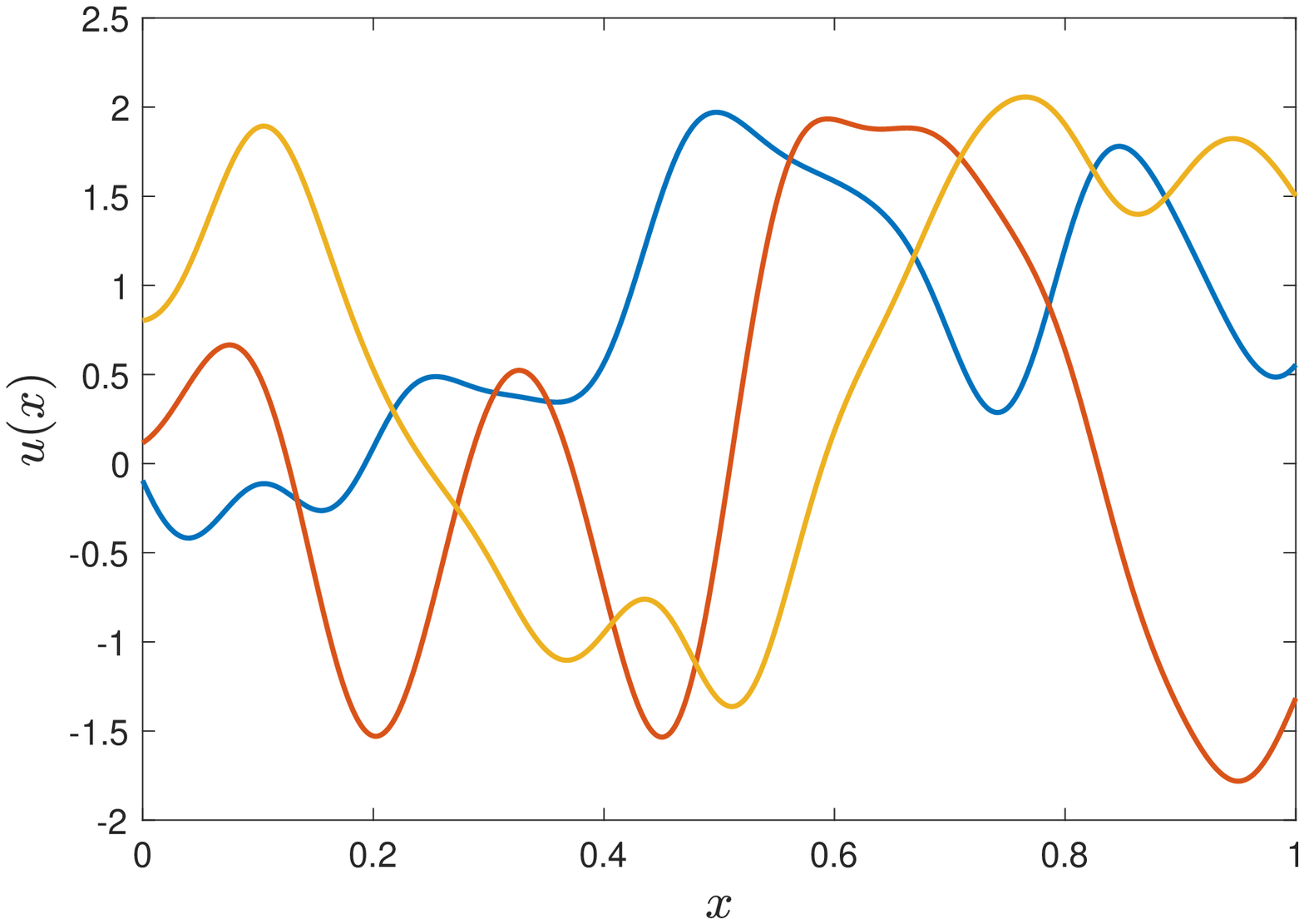}
\includegraphics[width=0.32\linewidth]{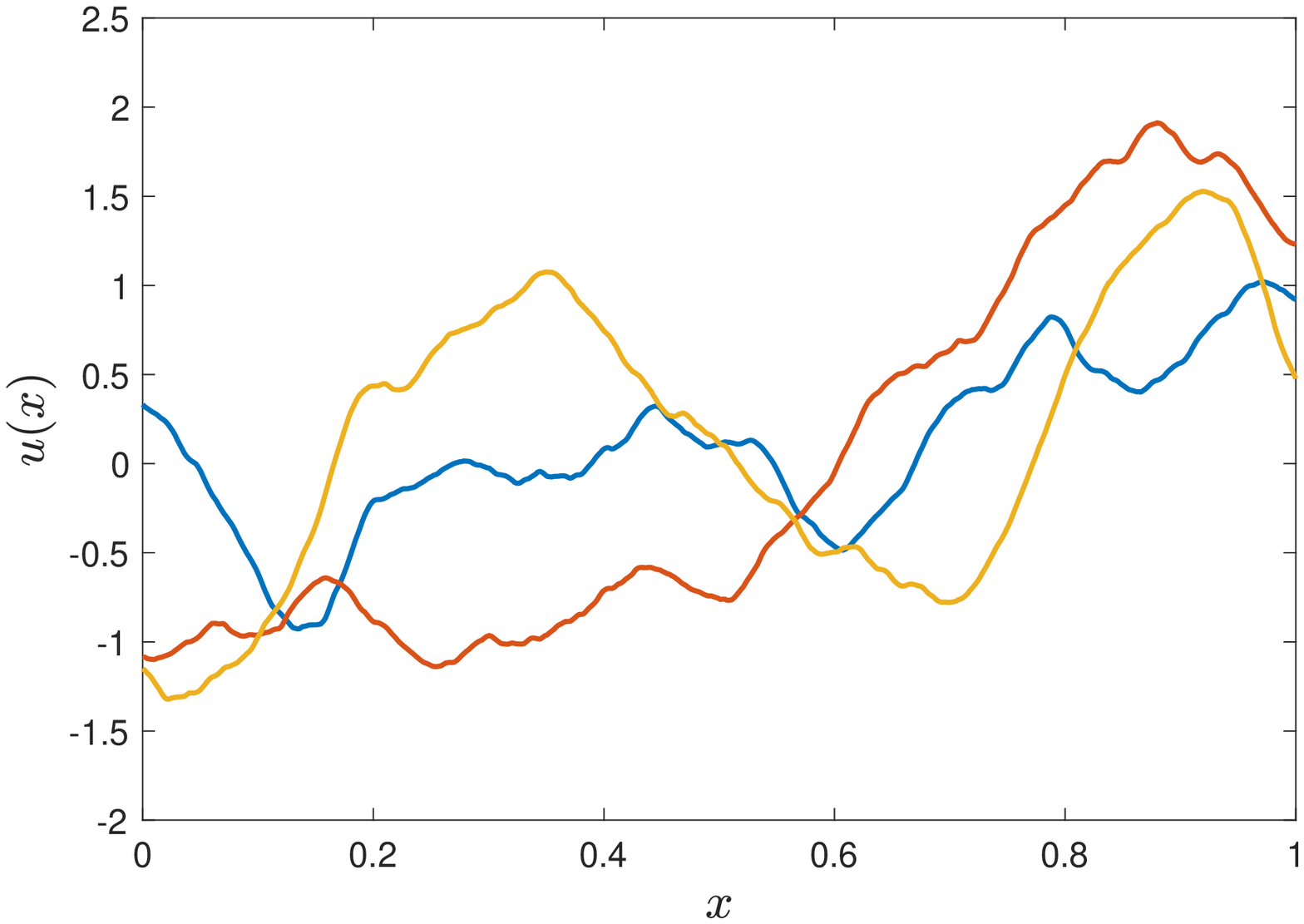}
\caption{Three sample paths each from Gaussian processes on $[0,1]$ with (left) Ornstein--Uhlenbeck, (middle) squared exponential, and (right) Matern $(\nu = 3/2)$ covariance functions.}
\label{fig:cov}
\end{figure}

\begin{example}[Ornstein--Uhlenbeck]
\label{ex:OU}
Let $D = [0,1]$. Given $\sigma,\ell > 0$, define the covariance function
\[
c_{OU}(t,t';\sigma,\ell) = \sigma^2\exp\left(-\frac{|t-t'|}{\ell}\right).
\]
This is the covariance associated with the stationary Ornstein--Uhlenbeck 
process on $[0,1]$ defined by
%with variance $\sigma^2$ and length scale $\ell$, specifically
\[
\dee u_t = -u_t/\ell\,\dee t + \sqrt{2\sigma^2/\ell}\,\dee W_t,\quad u_0 \sim N(0,\sigma^2),
\]
where $\sigma^2$ is the variance and $\ell$ the length scale. 
The sample paths of this process are almost surely H\"older with any exponent
less than one half, everywhere in $D$.

Given observation of $u_t$ over any interval $I \subseteq D$, the diffusion 
coefficient $\sigma^2/\ell$ may be found exactly by, for example, looking at quadratic variation. To see this, we rewrite in
terms of $(\sigma,\beta) = (\sigma,\sigma^2/\ell)$,
instead of treating $(\sigma,\ell)$ as the hyperparameters. 
With this parameterization we obtain
\[
\dee u_t = -\frac{\beta}{\sigma^2} u_t\,\dee t + \sqrt{2\beta}\dee W_t,\quad u_0 \sim N(0,\sigma^2).
\]
By Girsanov's theorem, the law of $u$ is equivalent to that for 
$\sqrt{2\beta} W_t$ for any choice of $\sigma^2$. Almost sure properties
are shared between equivalent measures and for this reason it is 
possible to recover $\beta$ from observation of $u_t$ over any interval 
$I \subseteq D$, as it is from observation of $\sqrt{2\beta} W_t$
\cite{roberts2001inference}.
However joint recovery of $\beta$ and $\sigma^2$ requires more data, 
such as observation of a sample path on $[0,\infty)$; see the discussion
in \cite{vanZanten01}.

Note also that the covariance function underlying this construction
can be generalized to more general $D\subseteq\R^d$, using $|\cdot|$
to denote the Euclidean norm on $\R^d$ -- it is then typically referred 
to as the exponential covariance function.

\end{example}

\begin{example}[Squared Exponential]
Let $D \subseteq\R^d$. Given $\sigma,\ell > 0$, define the covariance function
\[
c_{SE}(x,x';\sigma,\ell) = \sigma^2\exp\left(-\frac{|x-x'|^2}{2\ell^2}\right).
\]
Then the corresponding Gaussian process has samples which are almost surely infinitely differentiable everywhere; the parameters $\sigma^2,\ell$ 
represent variance and length-scale as for the Ornstein--Uhlenbeck covariance.
\end{example}

\begin{example}[Whittle--Mat\'ern]
\label{ex:wm}
Let $D\subseteq\R^d$. The Mat\'ern (or Whittle--Mat\'ern) covariance function provides an interpolation between the previous two examples in terms of sample regularity. The parameters $\sigma^2,\ell > 0$ have the same meaning as in the
previous two examples and, additionally, we introduce the regularity parameter $\nu > 0$. Define the covariance function\footnote{Some authors may include a factor $\sqrt{2\nu}$ before the distances $|x-x'|$. We omit it here for consistency with works such as \cite{matern_spde,roininen2014whittle}, which are key to the application of results in this paper.}
\[
c_{WM}(x,x';\sigma,\ell,\nu) = \sigma^2\frac{2^{1-\nu}}{\Gamma(\nu)}\left(\frac{|x-x'|}{\ell}\right)^\nu K_\nu\left(\frac{|x-x'|}{\ell}\right)
\]
where $K_\nu$ is the modified Bessel function of the second kind of order $\nu$. Then the corresponding Gaussian process has samples which possess up to $\nu$ (fractional) weak derivatives almost surely; if the domain $D$ is suitably regular they also possess up to $\nu$ H\"older derivatives almost surely. Note that we have
\[
c_{WM}(x,x';\sigma,\ell/\sqrt{2\nu},\nu) \to
\begin{cases}
c_{OU}(x,x';\sigma,\ell) &\text{ as }\nu\to 1/2\\
c_{SE}(x,x';\sigma,\ell) &\text{ as }\nu\to \infty.
\end{cases}
\]
If $D = \R^d$, the covariance function $c_{WM}(x,x';\sigma,\ell,\nu)$ is the Green's function for the fractional differential operator given by
\begin{equation}
\label{eq:prec}
L(\sigma,\ell,\nu) = \frac{\Gamma(\nu)}{\sigma^2\ell^{d}\Gamma(\nu+d/2)(4\pi)^{d/2}}(I-\ell^2\Delta)^{\nu+d/2}.
\end{equation}
This is the precision operator for the Gaussian measure.
The corresponding covariance operator is given by $C(\sigma,\ell,\nu) = L(\sigma,\ell,\nu)^{-1}$. On more general domains $D\subseteq\R^d$, boundary conditions must be imposed on the Laplacian in order to ensure the invertibility of $L(\sigma,\ell,\nu)$; this generally affects the stationarity of samples, however conditions may be chosen such that stationarity of samples is 
(approximately) preserved \cite{daon2016mitigating,khristenko2018analysis}.

Finally, observe that if $-\Delta$ on a bounded domain $D$, subject to appropriate boundary conditions, diagonalizes with eigenbasis $\{\varphi_j\}$ and corresponding eigenvalues $\{\lambda_j\}$, then $C(\sigma,\ell,\nu)$ diagonalizes in the same basis with eigenvalues $\{\mu_j(\sigma,\ell,\nu)\}$,
\begin{align}
\label{eq:wm_eval}
\mu_j(\sigma,\ell,\nu) = \frac{\sigma^2\ell^d\Gamma(\nu+d/2)(4\pi)^{d/2}}{\Gamma(\nu)}(1+\ell^2\lambda_j)^{-\nu-d/2}.
\end{align}
This is used later when considering consistency of point estimates.
\end{example}

\subsubsection{Likelihood}
Using the model \cref{eq:data}, assuming $\eta \perp u$, we have $y|u \sim N(Au,\mGamma)$ and so
\begin{subequations}
\label{eq:landphi}
\begin{align}
\mathbb{P}(y|u) &\propto \exp(-\Phi(u;y)),\\
\Phi(u;y) &= \frac{1}{2}\|Au - y\|_\mGamma^2,\label{eq:landphib}
\end{align}
\end{subequations}
where we have introduced the notation
\[
\|z\|_\mathsf{T}^2 := \la z,\mathsf{T}^{-1}z\ra
\]
for strictly positive-definite matrix or operator $\mathsf{T}$ on
Hilbert space with inner-product $\langle \cdot, \cdot \rangle$; here
we use the Euclidean inner-product on $Y=\R^J.$ Other data models, such as those involving multiplicative or non-Gaussian noise, may lead to more complicated likelihood functions -- we focus on Gaussian additive noise in this article for both clarity of presentation and analytical tractability.

\subsubsection{Posterior}
The posterior distribution is the law of the unknown state $u$ given the data $y$, that is, the law $\P(u|y)$. Bayes' theorem shows how to construct the posterior in terms of the prior and likelihood -- at the level of densities, it says formally that
\[
\P(u|y) = \frac{\P(y|u)\P(u)}{\P(y)}.
\]
If $X = \R^n$ and the prior $\mu_0 = N(0,\mC_0)$ is Gaussian, so that the above densities exist, the posterior density is then given by
\begin{align}
\label{eq:postn}
\P(u|y) \propto \exp\left(-\Phi(u;y) - \frac{1}{2}\|u\|_{\mC_0}^2\right),
\end{align}
where the (data-dependent) proportionality constant is given by $1/\P(y)$ and 
$$\P(y) = \int_X \exp\Bigl(-\Phi(u;y)-\frac{1}{2}\|u\|_{\mC_0}^2\Bigr)\,\dee u.$$
In the setting where $X$ is a function space and $\mu_0= N(0,\mC_0)$ is 
a centred  Gaussian random process prior, the posterior measure 
$\mu^y$ is given by 
\begin{align}
\label{eq:post}
\mu^y(\dee u) \propto \exp(-\Phi(u;y))\,\mu_0(\dee u).
\end{align}
The proportionality constant is given by $1/\P(y)$ and 
$$\P(y) = \int_X \exp(-\Phi(u;y))\,\mu_0(\dee u).$$ 

The posterior itself is Gaussian in this conjugate setting:
 $\mu^y = N(m,\mC)$, where, formally,
\begin{align}
\label{eq:post_gauss}
m = \mC_0A^*(\mGamma + A\mC_0A^*)^{-1}y,\quad \mC = \mC_0 - \mC_0A^*(\mGamma + A\mC_0A^*)^{-1}A\mC_0.
\end{align}
(In infinite dimensions justification of these formulae
requires careful specification of the functional analytic setting 
\cite{lehtinen1989linear}).

In more general cases, such as when the forward map is non-linear or the prior is only conditionally Gaussian, sampling typically cannot be performed directly, and methods such as MCMC or SMC must be used instead to target the posterior. We note here that when the prior is Gaussian, MCMC and
SMC methods are available for targeting the posterior that are well-defined on function space and possess dimension-independent convergence properties \cite{beskos2008mcmc,cotter2013mcmc,beskos2015sequential} -- the existence of such methods is important when considering the choice of hierarchical parameterization in the next subsection.

\subsection{Hierarchical Inversion}
\label{ssec:HI}
The choice of a particular prior distribution with fixed parameters
may be too restrictive in practice. For example, if a Whittle--Mat\'ern Gaussian distribution is chosen, good estimates of the regularity 
parameter $\nu$ or length-scale $\ell$ may not be known, and differing 
choices of these parameters can lead to very different estimates under the posterior \cite{neal1997monte}. In the Bayesian paradigm we may
treat these parameters as unknown random variables and place a prior distribution upon them. We now describe algorithmic issues arising
from how we choose to parameterize the resulting Bayesian inverse
problem.

\subsubsection{Natural Parameterization}
We denote the hyperparameters by $\theta \in \Theta$, and assume $\Theta$ is finite-dimensional. Denoting $\rho_0$ the Lebesgue density of the prior on $\theta$, we define the conditionally Gaussian prior distribution on $(u,\theta) \in X\times\Theta$ by
\begin{align}
\label{eq:prior}
\mu_0(\dee u,\dee \theta) = \nu_0(\dee u;\theta)\rho_0(\theta)\,\dee\theta
\end{align}
where $\nu_0(\dee u;\theta) = N(0,\mC(\theta))$. Bayes' theorem is applied as above, and the posterior is now a measure on the product space $Z = X\times\Theta$:
\begin{align}
\label{eq:post_hier}
\mu^y(\dee u,\dee\theta) \propto \exp(-\Phi(u;y))\,\mu_0(\dee u,\dee\theta).
\end{align}
As in the non-hierarchical setting, it is desirable to produce samples from the posterior in order to perform inference. The posterior is no longer Gaussian even when the forward map is linear, and so we cannot sample it directly. We can however take advantage of the conditional Gaussianity of the prior and the existence of dimension-robust MCMC sampling algorithms, as outlined in \cref{alg:cmwg}.

\begin{algorithm}
\begin{algorithmic}
\caption{Centred Metropolis-within-Gibbs}
\label{alg:cmwg}
\State Choose $u^{(1)} \in X$, $\theta^{(1)} \in \Theta$.
\For{$k=1:K$}
\State Generate $u^{(k)}\mapsto u^{(k+1)}$ with a dimension-robust MCMC algorithm targeting $u|y,\theta^{(k)}$.
\State Generate $\theta^{(k)}\mapsto \theta^{(k+1)}$ with an MCMC algorithm targeting $\theta|y,u^{(k+1)}$.
\EndFor
\State\Return $\{(u^{(k)},\theta^{(k)})\}_{k=1}^K$.
\end{algorithmic}
\end{algorithm}

However, even though the update $u^{(k)}\mapsto u^{(k+1)}$ uses a dimension-robust algorithm, the update $\theta^{(k)}\mapsto\theta^{(k+1)}$ can be problematic even though it is only targeting a finite-dimensional distribution. The acceptance probability for a proposed update $\theta^{(k)}\mapsto \theta'$ involves the Radon--Nikodym derivative between the Gaussian distributions $\nu_0(\cdot;\theta^{(k)})$ and $\nu_0(\cdot;\theta')$. Such a derivative does not exist in general -- by the Feldman--Haj\`ek theorem Gaussian measures in infinite dimensions are either equivalent or singular, and the restrictive conditions required for equivalence mean that 
in many naturally occurring situations,  two Gaussian measures 
corresponding to different values of $\theta$ are singular. In
practice this means that, for \cref{alg:cmwg}, any updates 
to $\theta$ have vanishingly small acceptance probability
with respect to increasingly fine discretization of $X$; see
\cite{roberts2001inference} for a seminal analysis of this phenomenon.
In the next 
subsubsection we discuss how this problem can be circumvented by means of a reparameterization.

\subsubsection{Reparameterization}
In the natural or {\em centred parameterization} \cite{papaspiliopoulos2007general}, we treat the input $u$ to the forward map as an unknown in the problem. However, the conditional nature of the prior on the pair $(u,\theta)$ leads to sampling problems related to measure singularity as described above. We therefore look for a way of parameterizing the prior that avoids this. We first make the observation that if $\xi \sim N(0,\mI)$, then for any fixed $\theta \in \Theta$ we have
\[
\mC(\theta)^{1/2}\xi \sim N(0,\mC(\theta)) = \nu_0(\dee u;\theta).
\]
Therefore, if we choose $\xi \sim N(0,\mI)$ and $\theta \sim \rho_0$ \emph{independently}, we have
\[
(\mC(\theta)^{1/2}\xi,\theta) \sim \nu_0(\dee u;\theta)\rho_0(\dee \theta) =  \mu_0(\dee u,\dee\theta).
\]
We can hence write a sample from $\mu_0$ as a deterministic transform of a sample from the product measure $N(0,\mI)\times\rho_0$ -- this reparameterization is referred to as \emph{noncentring} in the literature \cite{papaspiliopoulos2007general}. It has the advantage that we may pass it to the posterior distribution by sampling an appropriate surrogate distribution instead of directly targeting the posterior.

We now make the preceding statement precise. Let $\wX$ be a space of distributions that white noise samples $\xi \sim N(0,\mI)$ belong to almost surely, and define the product spaces $Z = X\times\Theta$, $\wZ = \wX\times \Theta$. Define the mapping $T:\wZ\to Z$ by $T(\xi,\theta) = (\mC(\theta)^{1/2}\xi,\theta)$. Then we have the following.

\begin{proposition}[Noncentring]
Let $\mu^y$ denote the hierarchical posterior \cref{eq:post_hier} on $Z$ with prior $\mu_0$. Define the measures $\wmu_0,\wmu^y$ on $\wZ$ by $\wmu_0 = N(0,\mI)\times \rho_0$ and\footnote{Here we have implicitly extended $\Phi:X\to\R$ to $\Phi:Z\to\R$ via projection: $\Phi(u,\theta) \equiv \Phi(u)$.}
\[
\wmu^y(\dee\xi,\dee \theta) \propto \exp\big(-\Phi(T(\xi,\theta))\big)\,\wmu_0(\dee \xi,\dee\theta).
\]
Then $\mu_0 = T^\sharp \wmu_0$ and $\mu^y = T^\sharp\wmu^y$.
\end{proposition}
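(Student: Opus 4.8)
The plan is to establish the two identities in order, since the second follows from the first by an abstract change-of-variables argument. First I would prove $\mu_0 = T^\sharp \wmu_0$. The map $T(\xi,\theta) = (\mC(\theta)^{1/2}\xi,\theta)$ leaves the hyperparameter coordinate fixed, so it has a triangular structure that interacts cleanly with the product measure $\wmu_0 = N(0,\mI)\times\rho_0$. Concretely, for any bounded measurable $g:Z\to\R$ I would write, using Fubini on the product measure,
\[
\int_Z g\,\dee(T^\sharp\wmu_0) = \int_{\wZ} g(T(\xi,\theta))\,\wmu_0(\dee\xi,\dee\theta) = \int_\Theta\Big(\int_{\wX} g(\mC(\theta)^{1/2}\xi,\theta)\,N(0,\mI)(\dee\xi)\Big)\rho_0(\theta)\,\dee\theta.
\]
For each fixed $\theta$ the inner integral is the integral of $u\mapsto g(u,\theta)$ against the pushforward of white noise under $\mC(\theta)^{1/2}$. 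By the observation recorded above the proposition, this pushforward is exactly $N(0,\mC(\theta)) = \nu_0(\cdot\,;\theta)$, so the inner integral equals $\int_X g(u,\theta)\,\nu_0(\dee u;\theta)$. Substituting and recognizing the definition \cref{eq:prior} of $\mu_0$ gives $\int_Z g\,\dee(T^\sharp\wmu_0) = \int_Z g\,\dee\mu_0$, proving the first identity.

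For $\mu^y = T^\sharp\wmu^y$ I would combine the first identity with the elementary pushforward rule $\int h\circ T\,\dee\wmu = \int h\,\dee(T^\sharp\wmu)$. The crucial structural point is that, by the footnote convention extending $\Phi$ via projection onto the $u$-coordinate, the potential composed with $T$ reduces to the potential evaluated at the image state: $\Phi(T(\xi,\theta)) = \Phi(\mC(\theta)^{1/2}\xi;y)$. Hence for bounded measurable $g$,
\[
\int_Z g\,\dee(T^\sharp\wmu^y) \propto \int_{\wZ} g(T(\xi,\theta))\,e^{-\Phi(T(\xi,\theta))}\,\wmu_0(\dee\xi,\dee\theta) = \int_{\wZ}\big((g\,e^{-\Phi})\circ T\big)\,\dee\wmu_0 = \int_Z g\,e^{-\Phi}\,\dee(T^\sharp\wmu_0),
\]
and invoking $T^\sharp\wmu_0 = \mu_0$ from the first step shows this equals $\int_Z g\,e^{-\Phi}\,\dee\mu_0$, which is proportional to $\int_Z g\,\dee\mu^y$ by the definition \cref{eq:post_hier}. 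Taking $g\equiv 1$ identifies the two normalizing constants, so the proportionalities are in fact equalities of probability measures and the identity holds.

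The bookkeeping above rests on two facts that carry the real content in infinite dimensions, and this is where I would concentrate the rigor. First, $T$ must be a well-defined measurable map $\wZ\to Z$: since white noise $\xi\sim N(0,\mI)$ does not live in $X$ but in the larger space $\wX$, one must check that $\mC(\theta)^{1/2}$ maps $\wX$ into $X$ and that $(\xi,\theta)\mapsto\mC(\theta)^{1/2}\xi$ is jointly measurable, so that the Fubini step and the pushforward are legitimate. Second, the claim that $(\mC(\theta)^{1/2})^\sharp N(0,\mI) = N(0,\mC(\theta))$---trivial in $\R^n$---is the substantive analytic ingredient; I would treat it as the established observation preceding the proposition, namely that applying $\mC(\theta)^{1/2}$ to white noise produces the Gaussian measure with covariance $\mC(\theta)$. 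Everything else is an application of Fubini and the change-of-variables formula, so the main obstacle is ensuring these measurability and Gaussian-pushforward statements hold in a manner uniform enough in $\theta$ to support the disintegration over $\Theta$.
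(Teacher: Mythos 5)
Your proof is correct and follows essentially the same route as the paper's: the first identity from the preceding observation that $\mC(\theta)^{1/2}$ pushes white noise forward to $N(0,\mC(\theta))$, and the second from the standard change-of-variables formula $\int f\,\dee(T^\sharp\mu) = \int f\circ T\,\dee\mu$. You simply spell out the Fubini disintegration and the normalizing-constant bookkeeping that the paper leaves implicit.
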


\begin{proof}
The first equality follows from the preceding discussion, and the second from a standard property of pushforward measures:
\[
\int f(x)\,(T^\sharp\mu)(\dee x) = \int f(T(y))\,\mu(\dee y).
\]
\end{proof}

The key consequence of this proposition is that if we sample $(\xi,\theta) \sim\wmu^y$, we have $T(\xi,\theta) \sim \mu^y$. We therefore use MCMC to target $\wmu^y$ instead of $\mu^y$ -- since the field $\xi$ and hyperparameter $\theta$ are independent under the prior, the previous measure singularity issues disappear. This leads us to \cref{alg:ncmwg}.

\begin{algorithm}
\begin{algorithmic}
\caption{Noncentred Metropolis-within-Gibbs}
\label{alg:ncmwg}
\State Choose $\xi^{(1)} \in \wX$, $\theta^{(1)} \in \Theta$.
\For{$k=1:K$}
\State Generate $\xi^{(k)}\mapsto \xi^{(k+1)}$ with a dimension-robust MCMC algorithm targeting $\xi|y,\theta^{(k)}$.
\State Generate $\theta^{(k)}\mapsto \theta^{(k+1)}$ with an MCMC algorithm targeting $\theta|y,\xi^{(k+1)}$.
\EndFor
\State \Return $\{T(\xi^{(k)},\theta^{(k)})\}_{k=1}^K$.
\end{algorithmic}
\end{algorithm}

Making the choice of noncentred variables over centred variables
leads, in the context of Gibbs-based MCMC, to significant improvement 
in algorithmic performance,
as detailed in a number of papers \cite{roberts2001inference,papaspiliopoulos2007general,yu2011center,agapiou2014analysis,CDPS18}.
However, as we demonstrate in the remainder of this paper, for
MAP estimation different considerations come in to play, and
centred methods are preferable.

\section{Point Estimation}
\label{sec:PE}
Sampling of the posterior distribution, for example using MCMC methods as mentioned in the previous section, or SMC methods as in \cite{beskos2015sequential},
may be prohibitively expensive computationally if a large number of samples are required. It is then desirable to find a point 
estimate for the solution to the problem, as opposed to the full posterior 
distribution. The conditional mean is one such point estimate, but this typically requires samples in order to be computed. Two alternative point 
estimates that we study in this paper, and define in this section, are 
the MAP estimate and the empirical Bayes (EB) estimate, both of which can be computed through optimization
procedures. The former can be interpreted as the mode of the posterior distribution, and the latter as a compromise between the mean and the mode.
In \cref{ssec:MAP} we introduce the basic MAP estimator and discuss
its properties under change of variables. In \cref{ssec:NCMAP} we
generalize to centred and noncentred hierarchical formulations; mapping
from one formulation to the other may be viewed as a hyperparameter dependent change of variables.
In \cref{ssec:EB} we define the empirical Bayes estimator.

\subsection{MAP Estimation}
\label{ssec:MAP}
In this subsection we review the definition of a MAP estimator in infinite 
dimensions and discuss its dependence on choice of parameterization.

\subsubsection{Non-Hierarchical Problems}
Suppose first that $X = \R^n$ and the posterior admits a Lebesgue density:
\[
\mu^y(\dee u) \propto \pi^y(u)\,\dee u
\]
A MAP estimate, or mode of the posterior distribution, is then any point $u \in X$ that maximizes $\pi^y$. Equivalently, it is any point that minimizes $-\log\pi^y$, which is usually more stable to deal with numerically. When the prior $\mu_0 = N(0,\mC_0)$ is taken to be Gaussian and the data 
model \cref{eq:data}, \cref{eq:landphi} is used, so that the posterior density is given by \cref{eq:postn}, a point $u \in X$ is hence a MAP estimator if and only if it minimizes the functional
\begin{align}
\label{eq:om_nonhier}
\mI(u) = \Phi(u;y) + \frac{1}{2}\|u\|_{\mC_0}^2.
\end{align}
The existence of a Lebesgue density is central to this definition of MAP estimate. We, however, are primarily interested in the case that $X$ is infinite-dimensional and a more general definition is therefore required. Dashti et al. \cite{dashti2013map} introduced such a generalization as follows.
\begin{definition}
Let $\mu$ be a Borel probability measure on a Banach space $\cX$, and denote by $B_\delta(u)$ the ball of radius $\delta$ centred at $u \in \cX$. A point $u_* \in \cX$ is said to be a MAP estimator for the measure $\mu$ if
\[
\lim_{\delta\to 0} \Bigg(\frac{\mu(B_\delta(u_*))}{\underset{u \in \cX}{\max}\,\mu(B_\delta(u))}\Bigg) = 1.
\]
\end{definition}
More general definitions have subsequently been introduced \cite{helin2015maximum,clason2018generalized}, but for the measures considered in this article they 
are equivalent to the definition above. If a Gaussian prior $\nu_0 = N(0,\mC_0)$ is chosen and the data model \cref{eq:data}, \cref{eq:landphi} is used so that the posterior distribution is given by \cref{eq:post}, then it is known \cite{dashti2013map} that a point $u$ is a MAP estimator if and only if it minimizes the Onsager-Machlup functional given by \cref{eq:om_nonhier};
the quadratic penalty term is the Cameron-Martin norm associated to
the Gaussian measure on Hilbert space $X$. This provides an explicit link between Bayesian and classical (Tikhonov) regularization. Note that, as distinct from the finite-dimensional case, the quadratic term in $\mI(u)$ is infinite at almost every point of the space $X$: $\mu_0(\{u \in X\,|\,\|u\|_{\mC_0}^2 < \infty\}) = 0$.
Although we have framed this discussion for the linear inverse problem
\cref{eq:data} subject to additive Gaussian noise,
it applies to the nonlinear setting, with Gaussian priors, and
$\Phi$ is simply the negative log-likelihood; however for this paper we
consider only linear inverse problems with additive Gaussian noise and
$\Phi$ is given by \cref{eq:landphib}.

\subsubsection{Parameterization Dependence}
MAP estimation makes a deep connection to classical applied
mathematics approaches to inversion via optimization and for this
reason it has an important place in the theory of Bayesian inversion.
However an often-cited criticism of MAP estimation within the statistics
community is that the methodology depends on the choice of parameterization of the model. To see this, assume again that $X = \R^n$ and that the posterior admits a Lebesgue density $\pi^y(u)$, so that the MAP estimator maximizes $\pi^y$. Suppose that we have a (smooth) bijective map $T:X\to X$, and instead write the unknown as $u = T(\xi)$ for some new coordinates $\xi$. Then the posterior in the coordinates $\xi$ is given by
\[
\bar{\pi}^y(\xi) = \pi^y(T(\xi))\times |\det(\nabla T(\xi))|,
\]
that is, for any bounded measurable $f:X\to\R$ we have
\[
\int_X f(u)\pi^y(u)\,\dee u = \int_X f(T(\xi))\bar{\pi}^y(\xi)\,\dee \xi.
\]
Due to the presence of this Jacobian determinant, the MAP estimators using the two coordinates generally differ. If there was no determinant term, we would have equivalence of the MAP estimators in the following sense, which is straightforward to verify.

\begin{proposition}
\label{prop:map_equiv}
$\xi_* \in \arg\max \pi^y(T(\cdot))$ if and only if $T(\xi_*) \in \arg\max \pi^y(\cdot)$.
\end{proposition}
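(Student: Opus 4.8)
The plan is to observe that the claimed equivalence is a purely set-theoretic consequence of $T$ being onto, and that no differential structure (in particular no Jacobian) enters. First I would record the one fact that does all the work: since $T:X\to X$ is a bijection, its range is all of $X$, so the set of attained values $\{\pi^y(T(\xi)) : \xi\in X\}$ coincides exactly with $\{\pi^y(u) : u\in X\}$, and in particular $\sup_\xi \pi^y(T(\xi)) = \sup_u \pi^y(u)$. With this in hand both implications are immediate.

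For the forward direction I would assume $\xi_*$ maximizes $\pi^y(T(\cdot))$, i.e. $\pi^y(T(\xi_*)) \geq \pi^y(T(\xi))$ for every $\xi$; given an arbitrary $u\in X$, surjectivity lets me write $u = T(\xi)$ for some $\xi$, so $\pi^y(T(\xi_*)) \geq \pi^y(u)$, and since $u$ was arbitrary $T(\xi_*)$ maximizes $\pi^y$. For the converse I would assume $T(\xi_*)$ maximizes $\pi^y$, so $\pi^y(T(\xi_*)) \geq \pi^y(u)$ for all $u$; specializing to $u = T(\xi)$ for an arbitrary $\xi$ gives $\pi^y(T(\xi_*)) \geq \pi^y(T(\xi))$, whence $\xi_*$ maximizes $\pi^y\circ T$.

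There is no real obstacle here; the statement is elementary, and the only thing worth being careful about is to phrase everything in terms of the range of $T$ so that the identification of the two $\arg\max$ sets is transparent. It is worth noting that only surjectivity of $T$ is used (injectivity is irrelevant to this particular claim). The point of isolating the proposition is the contrast with the preceding paragraph: the Jacobian factor $|\det(\nabla T(\xi))|$ in the transformed density $\bar\pi^y$ is exactly the term that destroys this clean equivalence once one compares the mode of $\bar\pi^y$ against that of $\pi^y$, rather than comparing maximizers of the bare composition $\pi^y\circ T$.
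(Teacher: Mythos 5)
Your proof is correct, and it is the standard argument the authors surely had in mind: the paper itself gives no proof, dismissing the proposition as ``straightforward to verify.'' Your observation that only surjectivity of $T$ is needed (the forward direction pulls back an arbitrary $u$ via $u=T(\xi)$, the converse merely restricts to the range) is exactly the right way to make the verification transparent, and your closing remark correctly identifies why the proposition is worth stating at all --- namely that the Jacobian factor in $\bar\pi^y$ is what breaks this equivalence for the actual transformed density.
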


It is natural to study how this issue of reparameterization
affects MAP estimators for hierarchical problems. In the previous 
section we chose a reparameterization in order to enable robust 
sampling. We show, however, that this reparameterization 
has undesirable effects on MAP estimation for hyperparameters.

\subsection{Hierarchical MAP Estimation}
\label{ssec:NCMAP}
In this subsection we extend the definition of a 
MAP estimator to the centred and noncentred hierarchical parameterization 
introduced in the previous section.

\subsubsection{Centred Hierarchical MAP Estimation}

We are interested in the case where $\mu_0$ on $Z = X\times\Theta$ is given by \cref{eq:prior}. The dependence of the covariance operator on the hyperparameter $\theta$ means that we cannot directly apply the above result for Gaussian measures to write down the Onsager-Machlup functional, as the normalization factor for the measure $\nu_0(\dee u;\theta)$ depends on $\theta$. If $X = \R^n$ is finite dimensional, we may write down the Onsager-Machlup functional as
\[
\IC(u,\theta) = \Phi(u;y) + \frac{1}{2}\|u\|_{\mC(\theta)}^2 + \frac{1}{2}\log\det \mC(\theta) - \log\rho_0(\theta).
\]
Now consider the case where $n \to \infty$ and $\R^n$ represents
approximation of an infinite dimensional space $X$. Since the 
limiting operator $\mC(\theta)$ is symmetric and compact, then
the determinant of finite dimensional approximations tends to zero
as $n\to\infty$. Additionally, the set of points for which the quadratic term is finite may depend on the hyperparameter $\theta$ -- in particular such sets for different values of $\theta$ may intersect only at the 
origin. As an example of this latter phenomenon, consider the  
Whittle--Mat\'ern process with precision operator $L$ given by
\cref{eq:prec}. The quadratic penalty term is $\langle u,Lu \rangle_X$
and, for different values of $\nu$ these correspond to different
Sobolev space penalizations. In summary, both the definition and 
optimization of the functional $\IC$ may be problematic in infinite 
dimensions; we show in what follows that this is also true for
sequences of finite dimensional problems
which approach the infinite dimensional limit.

Assuming now $X = \R^n$, if we fix $\theta \in \Theta$, then we can optimize $\IC(\cdot,\theta)$ to find $u(\theta) \in X$ such that
\[
\JC(\theta) := \IC(u(\theta),\theta) \leq \IC(u,\theta)\quad\text{for all }u \in X.
\]
In the linear setting \cref{eq:data} that is our focus, 
using \cref{eq:post_gauss}, we have
\begin{align}
\label{eq:u_theta}
u(\theta) = \mC(\theta)A^*(\mGamma + A\mC(\theta)A^*)^{-1}y.
\end{align}
We may then optimize $\JC(\cdot)$ to find $\theta_* \in \Theta$ such that
\[
\JC(\theta_*) = \IC(u(\theta_*),\theta_*) \leq \IC(u(\theta),\theta) \leq \IC(u,\theta)\quad\text{for all }u \in X,\theta \in \Theta. 
\]
The task of optimizing $\IC$ is hence reduced to that of optimizing $\JC$. In the next section we study the behaviour of minimizers of $\JC$ as the quality of the data increases.

\subsubsection{Noncentred Hierarchical MAP Estimation}

If we work with the noncentred coordinates introduced in the previous section, the joint prior measure is the independent product of a Gaussian measure on $\wX$ and with the hyperprior on $\Theta$. MAP estimators can hence be seen to be well-defined on the infinite-dimensional space $\wZ = \wX\times\Theta$, and to be equivalent to minimizers of the Onsager-Machlup functional
\[
\INC(\xi,\theta) = \Phi(\mC(\theta)^{1/2}\xi;y) + \frac{1}{2}\|\xi\|_{I}^2 - \log\rho_0(\theta).
\]
Note that if we reverse the transformation and write $(\xi,\theta) = T^{-1}(u,\theta) = (\mC(\theta)^{-1/2}u,\theta)$, we could equivalently define $\INC$ on $Z = X\times\Theta$ by
\[
\INC(u,\theta) = \Phi(u;y) + \frac{1}{2}\|u\|_{\mC(\theta)}^2 - \log\rho_0(\theta),
\]
in view of \cref{prop:map_equiv}. This is $\IC$, with the
the problematic log-determinant term subtracted.

As in the centred case, we can now fix $\theta$ and optimize $\INC(\cdot,\theta)$ over to  find $\xi(\theta) \in \wX$ such that
\[
\JNC(\theta) := \INC(\xi(\theta),\theta) \leq \INC(\xi,\theta)\quad\text{for all }\xi \in \wX.
\]
Again, in the linear setting \cref{eq:data}, 
using \cref{eq:post_gauss}, we have that $\xi(\theta)$ is given by
\[
\xi(\theta) = \mC(\theta)^{1/2}A^*(\mGamma + A\mC(\theta)A^*)^{-1}y.
\]
Note that $u(\theta) = \mC(\theta)^{1/2}\xi(\theta)$, which 
is consistent with \cref{prop:map_equiv}. However, note that $\JC\neq \JNC$: only the former has the log-determinant term, and so the MAP estimate for the hyperparameters typically differs between the two parameterizations.

\begin{remark}
To understand that the difference between $\JC$ and $\JNC$ is
related to the volume term arising from change of parameterization,
consider the case $X = \R^n$. We start with the measure
\begin{align*}
\mu(\dee u,\dee \theta) &\propto \exp(-\IC(u,\theta))\dee\theta\\
&= \exp\left(-\Phi(u;y) - \frac{1}{2}\|u\|_{\mC(\theta)}^2 - \frac{1}{2}\log\det\mC(\theta) + \log\rho_0(\theta)\right)\dee u\,\dee\theta\\
&=: f(u,\theta)\,\dee u\,\dee\theta.
\end{align*}
We make the transformation $(u,\theta) = T(\xi,\theta) = (\mC(\theta)^{1/2}\xi,\theta)$. The density in these new coordinates is now given by
\[
h(\xi,\theta) = f(T(\xi,\theta))\times|\det(\nabla T(\xi,\theta))|.
\]
The Jacobian determinant may be calculated as
\[
\det(\nabla T(\xi,\theta)) = \det(\nabla_\xi T_1(\xi,\theta))\det(\nabla_\theta T_2(\xi,\theta)) = \det(\mC(\theta)^{1/2})\det(\mI) = \det(\mC(\theta))^{1/2}.
\]
The log determinant terms hence cancel, giving
\[
h(\xi,\theta) \propto \exp\left(-\Phi(\mC(\theta)^{1/2}\xi;y) - \frac{1}{2}\|\xi\|_I^2 + \log\rho_0(\theta)\right) = \exp(-\INC(\xi,\theta)).
\]
\end{remark}

\subsection{Empirical Bayesian Estimation}
\label{ssec:EB}
Instead of jointly optimizing over the state $u$ and hyperparameters $\theta$, we may integrate out the state to obtain a measure just on $\theta$. In this case, one considers finding the mode of the marginal measure
\[
\P(\dee \theta|y) = \int_X \mu^y(\dee u,\dee \theta) = \left(\frac{1}{\P(y)}\int_X \exp(-\Phi(u;y))\,\nu_0(\dee u;\theta)\right)\rho_0(\theta)\,\dee\theta.
\]
The corresponding functional we wish to optimize to find $\theta$ is hence given by
\begin{align}
\label{eq:JE_int}
\JE(\theta) = -\log\left(\int_X \exp(-\Phi(u;y))\,\nu_0(\dee u;\theta)\right) -\log\rho_0(\theta).
\end{align}
In general the above functional cannot be written down more explicitly due to the intractability of the integral. When $X=\R^n$ is finite-dimensional, the integral may be approximated using a Monte Carlo average over samples $\{u_j\}_{j=1}^M \sim \exp(-\Phi(u;y))\,\nu_0(\dee u;\theta')$ for any fixed $\theta' \in \Theta$:
\begin{align*}
\JE(\theta) &= -\log\left(\int_X \frac{\nu_0(u;\theta)}{\nu_0(u;\theta')}\exp(-\Phi(u;y))\,\nu_0(\dee u;\theta')\right) - \log\rho_0(\theta)\\
&\approx -\log\sum_{j=1}^M \exp\left(\frac{1}{2}\|u_j\|^2_{\mC(\theta')}-\frac{1}{2}\|u_j\|_{\mC(\theta)}^2 + \frac{1}{2}\log\det\mC(\theta')\mC(\theta)^{-1}\right) - \log\rho_0(\theta)\\
&=: \JE(\theta;\theta',\{u_j\}),
\end{align*}
where the log-sum-exp trick may be used numerically to avoid underflow \cite[\S 3.5.3]{murphy}. One may then aim to approximately optimize $\JE$ via \cref{alg:em}, which alternates approximating the integral above via samples from the conditional posterior given the current hyperparameter values, and optimizing over the hyperparameters given these samples, a form
of expectation-maximization (EM) algorithm. The sampling in each step is typically be performed using a dimension-robust MCMC algorithm, such as the pCN algorithm; the resulting random sequence $\{\theta^{(k)}\}$ can then be averaged, for example, to produce a single hyperparameter estimate.

\begin{algorithm}
\caption{EM Algorithm}
\label{alg:em}
\begin{algorithmic}
\State Choose initial estimate $\theta^{(1)}$ for the hyperparameter.
\For{$k=1:K$}
	\State Sample $\{u_j^{(k)}\}_{j=1}^M \sim \exp(-\Phi(u;y))\,\nu_0(\dee u;\theta^{(k)})$.
	\State $\theta^{(k+1)} \gets \underset{\theta\in\Theta}{\mathrm{argmin}}\;\JE(\theta;\theta^{(k)},\{u_j^{(k)}\})$.
\EndFor
\end{algorithmic}
\end{algorithm}

In the linear setting \cref{eq:data}, the integral in \cref{eq:JE_int} 
can be computed analytically using Gaussian structure. Rather than calculate the integral above directly, we note that we may rewrite the data in noncentred coordinates as
\[
y = A\mC(\theta)^{1/2}\xi + \eta,\quad \eta \sim N(0,\mGamma)
\]
where $\xi \sim N(0,\mI)$; from this it can be seen that $\P(y|\theta) = N(0,\mGamma + A\mC(\theta) A^*)$. Thus, by Bayes' theorem,
\begin{align*}
\P(\dee \theta|y) \propto\frac{1}{\sqrt{\det(\mGamma + A\mC(\theta) A^*)}} \exp\left(-\frac{1}{2}\|y\|_{\mGamma + A\mC(\theta) A^*}^2\right)\rho_0(\theta)\,\dee\theta.
\end{align*}
Modes of this marginal measure are then given by minimizers of the functional
\[
\JE(\theta) = \frac{1}{2}\|y\|_{\mGamma + A\mC(\theta) A^*}^2 + \frac{1}{2}\log\det(\mGamma + A\mC(\theta) A^*) - \log\rho_0(\theta).
\]
Despite involving norms and determinants on the data space rather than the state space, the form of $\JE$ is actually very similar to that of $\JC$, as is
shown in the following section.

\begin{remark}
In the spirit of this paper, we later consider the mode of $\P(\dee \theta| y)$ as the empirical estimator for $\theta$. Such a choice can also be considered as a regularized maximum likelihood estimator, where the hyperparameter density acts as a regularizer. 
\end{remark}

\section{Consistency of Point Estimators}
\label{sec:consistency}
In the previous section we derived three different functionals, $\JC,
\JNC$ and $\JE$. Optimizing each of these functionals 
leads to different estimates of the hyperparameters of the same underlying
statistical model. In this section we study the behaviour of these estimates 
in a data-rich scenario. In \cref{ssec:datamodel} we spell
out the precise data model that we use; it corresponds to a finite
dimension $N$ truncation of the linear inverse problem \cref{eq:data},
and since subsequent limit theorems focus on the situation in
which the observational noise standard deviation
$\gamma$ is small, we write the resulting functionals to be
optimized as $\JCN, \JNCN$ and $\JEN$. 
\cref{prop:functionals} gives the exact form for
the resulting functionals and demonstrates the similar form taken
by $\JCN$ and $\JEN$, whilst also showing that $\JNCN$ is substantially
different.
\Cref{ssec:CoM} contains
the limit theorems which characterize the three different estimators
in the data-rich limit. \cref{thm:main} shows that
the centred and empirical Bayes approaches recover the true
parameter value whilst the noncentred approach does not.
In \cref{ssec:E} we discuss examples.  

\subsection{The Data Model}
\label{ssec:datamodel}
In order to analyse the behaviour of these minimizers, we work in the simplified setup where the forward map $A$ is linear, and $A^*A$ is simultaneously diagonalizable with the family of covariance operators. Specifically, we make the following assumptions.
\begin{assumptions}
We assume in what follows that:
\begin{enumerate}[(i)]
\item The map $A^*A$ and family of prior covariance operators $\{\mC(\theta)\}_{\theta \in \Theta}$ are strictly positive and simultaneously diagonalizable with orthonormal eigenbasis $\{\varphi_j\}$, and we have
\[
A^*A\varphi_j = a_j^2\varphi_j,\quad C(\theta)\varphi_j = \mu_j(\theta)\varphi_j\quad\text{for all }j\in \N, \theta \in \Theta.
\]
\item The noise covariance $\mGamma = \gamma^2 \mI$ is white.
\end{enumerate}
\end{assumptions}

\begin{remark}
The second assumption is essentially equivalent to assuming that the noise covariance $\mGamma$ is non-degenerate: we may work with the transformed data $\mGamma^{-1/2}y$ and redefine $A$ as $\mGamma^{-1/2}A$. We could hence equivalently replace $A^*A$ with $A^*\Gamma^{-1}A$ in the first assumption.
\end{remark}

We choose the basis $\{\psi_j\}$ for $Y$ given by $\psi_j = A\varphi_j/\|A\varphi_j\| = A\varphi_j/a_j$; it can readily be checked that this is an orthonormal basis. Assume that the true state $u^\dagger$ that generates the data is drawn from the distribution $N(0,\mC(\theta^\dagger))$ for some $\theta^\dagger \in \Theta$. We define the data $y^\gamma \in Y$ by
\[
y^\gamma = Au^\dagger + \gamma\eta,\quad \eta \sim N(0,\mI),
\]
where we have made the dependence of the data on $\gamma$ explicit. We define individual observations $y^\gamma_j \in \R$ of the data $y^\gamma \in Y$ as
\begin{align}
\label{eq:data_c}
\notag y_j^\gamma &:= \la y^\gamma,\psi_j\ra\\
\notag &= \frac{1}{a_j}\la Au^\dagger,A\varphi_j\ra + \gamma\la \eta,\psi_j\ra\\
\notag &= \frac{1}{a_j}\la u^\dagger,A^*A\varphi_j\ra + \gamma\la \eta,\psi_j\ra\\
&=  a_j u_j^\dagger + \gamma\eta_j,\quad \eta_j\iid N(0,1),\quad j \in \N,
\end{align}
where $u_j^\dagger := \la u^\dagger,\varphi_j\ra$. It is convenient to note that we have the equality in distribution with the noncentred-type representation
\begin{align}
\label{eq:data_nc}
y_j^\gamma \eqd \sqrt{a_j^2\mu_j(\theta^\dagger) + \gamma^2}\xi_j^\dagger,\quad \xi_j^\dagger \iid N(0,1),\quad j \in \N.
\end{align}
As we establish results regarding convergence of minimizers in probability, there is no loss in generality in assuming that the data is given by \cref{eq:data_nc} instead of \cref{eq:data_c}.

The infinite collection of scalar problems \cref{eq:data_c} is equivalent to the full infinite-\linebreak dimensional problem. We consider a sequence of finite-dimensional problems arising from taking the first $N$ of these observations, so that data provided for the $N^{th}$ problem is given by
\begin{align}
\label{eq:data_c_N}
y_j^\gamma = a_j u_j^\dagger + \gamma\eta_j,\quad \eta_j\iid N(0,1),\quad j = 1,\ldots,N.
\end{align}
We take the prior distribution for these problems to be the projection of the full prior onto the span of the first $N$ eigenfunctions $\{\varphi_j\}_{j=1}^N$, so that both the state and the data are finite-dimensional. To motivate why we use this projection of the prior distribution, we look at the structure of the likelihood. Writing $y^\gamma_{1:N}$ for the vector of observations $(y^\gamma_1,\ldots,y^\gamma_N) \in \R^N$, the negative log-likelihood of $y^\gamma_{1:N}$ given $u$ takes the form
\begin{align*}
\Phi_\gamma(u;y_{1:N}^\gamma) &= \frac{1}{2\gamma^2}\sum_{j=1}^N |\la Au-y^\gamma,\psi_j\ra|^2\\
&= \frac{1}{2\gamma^2}\sum_{j=1}^N \left|\frac{1}{a_j}\la A^*Au,\varphi_j\ra - \la y^\gamma,\psi_j\ra\right|^2\\
&= \frac{1}{2\gamma^2}\sum_{j=1}^N |a_j u_j - y_j^\gamma|^2
\end{align*}
where $u_j := \la u,\varphi_j\ra$. The posterior on $u_j$ for $j > N$ is hence uninformed by the observations and remains the same as the prior. To be more 
explicit, for the $N^{th}$ problem we choose the conditional prior distribution $\nu_0^N(\cdot;\theta) = P_N^\sharp \nu_0(\cdot;\theta)$, where $P_N:X\to\R^N$ is given by $(P_Nu)_j = u_j$ for $j=1,\ldots,N$. Since $\nu_0(\cdot;\theta) = N(0,\mC(\theta))$ is Gaussian on $X$, this is equivalent to saying $\nu_0^N(\cdot;\theta) = N(0,P_N\mC(\theta)P_N^*)$ is Gaussian on $\R^N$.

We denote by $\JCN$, $\JNCN$ and $\JEN$ the functionals $\JC$, $\JNC$ and $\JE$ respectively constructed for these finite dimensional problems. We study the convergence of estimates of the hyperparameter $\theta$
to its true value $\theta^\dagger$ in the simultaneous limit of the number of observations $y^\gamma_1,\ldots,y^\gamma_N$ going to infinity and the noise level $\gamma$ going to zero. 

\begin{remark}
\label{rem:trunc}
The above truncation has no effect on the forms of the functionals $\JNCN$ and $\JEN$; $\JCN$ however does change. Nonetheless, if the non-truncated prior is used to write down $\JCN$, poor estimates for hyperparameters are obtained as the prior then dominates over the observations, see \cref{ssec:trunc} for an illustration.
\end{remark}

For brevity, in what follows we use the notation $f(\theta) \eqc g(\theta)$ to mean that  $f(\theta) = \alpha g(\theta) + \beta$ for some constants $\alpha,\beta$ -- note that $f$ and $g$ then have the same minimizers. 

\begin{proposition}
\label{prop:functionals}
Define $s_j^\gamma(\theta) = a_j^2 \mu_j(\theta) + \gamma^2$. Then we have
\begin{align}
\label{eq:jcn}\JCN(\theta) &\eqc \frac{1}{2N} \sum_{j=1}^N \left[ \frac{(y_j^\gamma)^2}{s^\gamma_j(\theta)} -  \log \frac{\mu_j(\theta^\dagger)}{\mu_j(\theta)} \right] - \frac{1}{N}\log \rho_0(\theta),\\
\label{eq:jncn}\JNCN(\theta) &\eqc \frac{1}{2N} \sum_{j=1}^N \frac{(y_j^\gamma)^2}{s^\gamma_j(\theta)} - \frac{1}{N}\log \rho_0(\theta),\\
\label{eq:jen}\JEN(\theta) &\eqc \frac{1}{2N} \sum_{j=1}^N \left[ \frac{(y_j^\gamma)^2}{s^\gamma_j(\theta)} -  \log \frac{s^\gamma_j(\theta^\dagger)}{s^\gamma_j(\theta)} \right] - \frac{1}{N}\log \rho_0(\theta).
\end{align}
\end{proposition}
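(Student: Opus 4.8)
The plan is to exploit the simultaneous diagonalization in the standing assumptions to reduce each functional to an explicit sum over scalar modes, after which all three expressions follow by elementary calculation. First I would record the relevant eigenvalues. Since $A^*A\varphi_j = a_j^2\varphi_j$ and $\mC(\theta)\varphi_j = \mu_j(\theta)\varphi_j$, the operator $A\mC(\theta)A^*$ is diagonal in the basis $\{\psi_j\}$ with eigenvalues $a_j^2\mu_j(\theta)$, so that $\mGamma + A\mC(\theta)A^* = \gamma^2\mI + A\mC(\theta)A^*$ has eigenvalues $s_j^\gamma(\theta) = a_j^2\mu_j(\theta) + \gamma^2$. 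Because the $N$-th problem uses the projected prior $\nu_0^N(\cdot;\theta) = N(0, P_N\mC(\theta)P_N^*)$, all determinants and norms below involve only the first $N$ modes, and $P_N\mC(\theta)P_N^*$ is diagonal on $\R^N$ with entries $\mu_1(\theta),\ldots,\mu_N(\theta)$.

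For the centred and noncentred functionals I would carry out the inner minimization over the state explicitly. With the projected prior the $u$-dependent part $\Phi_\gamma(u;y_{1:N}^\gamma) + \tfrac{1}{2}\|u\|_{\mC(\theta)}^2$ decouples into a sum of scalar quadratics $\tfrac{1}{2\gamma^2}(a_j u_j - y_j^\gamma)^2 + \tfrac{1}{2\mu_j(\theta)}u_j^2$, each minimized at $u_j(\theta) = a_j\mu_j(\theta)y_j^\gamma/s_j^\gamma(\theta)$, the modewise form of \cref{eq:u_theta}. Substituting back and using $a_j u_j(\theta) - y_j^\gamma = -\gamma^2 y_j^\gamma/s_j^\gamma(\theta)$, the residual and penalty terms recombine to $\tfrac{1}{2}(y_j^\gamma)^2/s_j^\gamma(\theta)$; summing over $j$ produces the shared quadratic term $\tfrac{1}{2}\sum_{j=1}^N (y_j^\gamma)^2/s_j^\gamma(\theta)$ present in both $\JCN$ and $\JNCN$. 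For $\JCN$ one then adds the log-determinant $\tfrac{1}{2}\log\det(P_N\mC(\theta)P_N^*) = \tfrac{1}{2}\sum_{j=1}^N\log\mu_j(\theta)$, which is absent from $\INC$ and hence from $\JNCN$. For $\JEN$ I would instead evaluate the closed form $\JE$ directly on the data space: in the basis $\{\psi_j\}$ the operator $\mGamma + A\mC(\theta)A^*$ restricted to the first $N$ modes is diagonal with entries $s_j^\gamma(\theta)$, so that $\|y_{1:N}^\gamma\|^2_{\mGamma + A\mC(\theta)A^*} = \sum_{j=1}^N (y_j^\gamma)^2/s_j^\gamma(\theta)$ and $\log\det(\mGamma + A\mC(\theta)A^*) = \sum_{j=1}^N\log s_j^\gamma(\theta)$.

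Finally I would pass to the stated forms via the $\eqc$ relation. Dividing each functional by $N$ supplies the factor $\alpha = 1/N$, and the $\theta$-independent additive constants $\tfrac{1}{2N}\sum_{j=1}^N\log\mu_j(\theta^\dagger)$ (for $\JCN$) and $\tfrac{1}{2N}\sum_{j=1}^N\log s_j^\gamma(\theta^\dagger)$ (for $\JEN$) may be inserted to rewrite $\tfrac1N\log\mu_j(\theta)$ and $\tfrac1N\log s_j^\gamma(\theta)$ in the displayed ratio form. There is no genuine analytic obstacle here; the computation is elementary. The only points requiring care are bookkeeping ones: ensuring the truncation to the projected prior is respected so that the determinants run only to $N$ (as flagged in \cref{rem:trunc}), and tracking which terms are independent of $\theta$ so that each use of $\eqc$ is legitimate. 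The cosmetic insertion of the $\theta^\dagger$ terms is made with an eye toward the convergence analysis of the next subsection, where these ratio forms make the role of $\theta^\dagger$ transparent.
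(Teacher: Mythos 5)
Your proof is correct, and for the centred and noncentred functionals it follows essentially the same route as the paper: reduce to scalar modes, minimize the quadratic in each mode to get $u_j(\theta) = a_j\mu_j(\theta)y_j^\gamma/s_j^\gamma(\theta)$, and observe that the residual and penalty recombine to $\tfrac12 (y_j^\gamma)^2/s_j^\gamma(\theta)$ (the paper obtains $u(\theta)$ via a Sherman--Morrison--Woodbury rewriting of \cref{eq:u_theta} rather than by differentiating the scalar quadratics, but this is cosmetic). Where you genuinely diverge is the empirical Bayes case. The paper does \emph{not} simply diagonalize $\mGamma + A\mC(\theta)A^*$ in the basis $\{\psi_j\}$: it pulls the norm back to the state space with a second Sherman--Morrison--Woodbury application, and handles the determinant via \cref{lem:determinant}, writing $\log\det(\mGamma + A\mC(\theta)A^*) = \log\det(A^*\mGamma A + A^*A\mC(\theta)A^*A) - \log\det(AA^*)$ so that everything can be evaluated in the eigenbasis $\{\varphi_j\}$ of $A^*A$. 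Your shortcut --- noting that $A\mC(\theta)A^*\psi_j = a_j^2\mu_j(\theta)\psi_j$, hence $\mGamma + A\mC(\theta)A^*$ is diagonal on $\mathrm{span}\{\psi_1,\dots,\psi_N\}$ with entries $s_j^\gamma(\theta)$ --- is valid and cleaner, and it makes \cref{lem:determinant} unnecessary. The one point worth stating explicitly if you take this route is that any component of the data space orthogonal to $\mathrm{span}\{\psi_j\}$ sees only $\mGamma = \gamma^2\mI$, so it contributes purely $\theta$-independent terms to both the norm and the log-determinant and is absorbed by the relation $\eqc$; the paper's determinant lemma is precisely the device that disposes of this $\theta$-independent discrepancy (the $\det(AA^*)$ factor) without appealing to the spanning structure of $\{\psi_j\}$.
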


\begin{remark}
We have made the shifts
\[
\JCN(\theta) \mapsto \JCN(\theta) - \frac{1}{2}\sum_{j=1}^N \log\mu_j(\theta^\dagger),\quad \JEN(\theta) \mapsto \JEN(\theta) - \frac{1}{2}\sum_{j=1}^N \log s_j^\gamma(\theta^\dagger).
\]
These do not affect minimizers, as the shifts are constant in $\theta$. These transformations are useful in the next section in the derivation of a limiting functional as $N\to\infty$ and $\gamma\to 0$.
\end{remark}

\begin{proof}
Instead of the expression for $u(\theta)$ given by \cref{eq:u_theta}, we use the alternative expression
\[
u(\theta) = (A^*\mGamma^{-1}A + \mC(\theta)^{-1})^{-1}A^*\mGamma^{-1}y = \frac{1}{\gamma^2}\left(\frac{1}{\gamma^2}A^*A + \mC(\theta)^{-1}\right)^{-1}A^*y
\]
which follows from the Sherman--Morrison--Woodbury formula. Using the simultaneous diagonalizability, we then have that
\begin{align*}
u_j(\theta) &:= \la u(\theta),\varphi_j\ra = \frac{1}{\gamma^2}\left(\frac{a_j^2}{\gamma^2} + \frac{1}{\mu_j(\theta)}\right)^{-1} \la A^*y,\varphi_j\ra = \frac{a_j\mu_j(\theta)}{s_j^\gamma(\theta)}y_j^\gamma.
\end{align*}
Now consider the functional
\[
\mathsf{J}_0^{N,\gamma}(\theta) := \Phi_\gamma(u;\theta) + \frac{1}{2}\|u\|_{\mC(\theta)}^2.
\]
We may calculate
\begin{align*}
\mathsf{J}_0^{N,\gamma}(\theta) &= \frac{1}{2\gamma^2}\sum_{j=1}^N (a_j u_j(\theta) - y_j^\gamma)^2 + \frac{1}{2}\sum_{j=1}^N \frac{u_j(\theta)^2}{\mu_j(\theta)}\\
&=\frac{1}{2}\sum_{j=1}^N (y_j^\gamma)^2\left[\frac{1}{\gamma^2}\left(\frac{a_j^2\mu_j(\theta)}{s_j^\gamma(\theta)}-1\right)^2 + \frac{a_j^2\mu_j(\theta)}{s_j^\gamma(\theta)^2}\right]\\
&= \frac{1}{2}\sum_{j=1}^N \frac{(y_j^\gamma)^2}{s_j^\gamma(\theta)^2}\left[ \frac{1}{\gamma^2}\left(a_j^2\mu_j(\theta) - s_j^\gamma(\theta)\right)^2 + a_j^2\mu_j(\theta)\right]\\
&= \frac{1}{2}\sum_{j=1}^N \frac{(y_j^\gamma)^2}{s_j^\gamma(\theta)}.
\end{align*}
The expression for $\JNCN$ then follows. For $\JCN$, we note that
\[
\frac{1}{2}\log\det\mC(\theta) = \frac{1}{2}\sum_{j=1}^N \log \mu_j(\theta) \eqc -\frac{1}{2}\sum_{j=1}^N \log \frac{\mu_j(\theta^\dagger)}{\mu_j(\theta)}
\]
from which the result follows. Finally we deal with the empirical Bayes case $\JEN$. Observe that
\begin{align*}
\frac{1}{2}\|y^\gamma\|_{\mGamma + A\mC(\theta)A^*}^2 &= \frac{1}{2}\sum_{i,j=1}^N y_i^\gamma y_j^\gamma\la \psi_i,(\mGamma + A\mC(\theta)A^*)^{-1}\psi_j\ra\\
&= \frac{1}{2}\sum_{i,j=1}^N y_i^\gamma y_j^\gamma\cdot\frac{1}{a_ia_j}\la \varphi_i,A^*(\mGamma + A\mC(\theta)A^*)^{-1}A\varphi_j\ra.
\end{align*}
Using the Sherman--Morrison--Woodbury identity again, we may write
\begin{align*}
A^*(\mGamma + A\mC(\theta)A^*)^{-1}A &= A^*\mGamma^{-1}A - A^*\mGamma^{-1}A(A^*\mGamma^{-1}A+\mC(\theta)^{-1})^{-1}A^*\mGamma^{-1}A\\
&= \frac{1}{\gamma^2}A^*A - \frac{1}{\gamma^2}A^*A\left(\frac{1}{\gamma^2}A^*A+\mC(\theta)^{-1}\right)^{-1}\frac{1}{\gamma^2}A^*A,
\end{align*}
and so by the simultaneous diagonalizability, and orthonormality of $\{\varphi_j\}$,
\begin{align*}
\frac{1}{2}\|y\|_{\mGamma + A\mC(\theta)A^*}^2 &= \frac{1}{2}\sum_{j=1}^N \frac{(y_i^\gamma)^2}{a_j^2}\left[\frac{a_j^2}{\gamma^2} - \frac{a_j^2}{\gamma^2}\left(\frac{a_j^2}{\gamma^2} - \frac{1}{\mu_j(\theta)}\right)^{-1}\frac{a_j^2}{\gamma^2}\right]\\
&= \frac{1}{2}\sum_{j=1}^N \frac{(y_i^\gamma)^2}{\gamma^2}\left[1 - \frac{a_j^2\mu_j(\theta)}{s_j^\gamma(\theta)}\right]\\
&= \frac{1}{2}\sum_{j=1}^N \frac{(y_i^\gamma)^2}{s_j^\gamma(\theta)}.
\end{align*}
To deal with the log-determinant term, we use \cref{lem:determinant} to see that
\begin{align*}
\frac{1}{2}\log\det(\mGamma + A\mC(\theta) A^*) = \frac{1}{2}\log\det(A^*\mGamma A + A^*A\mC(\theta) A^*A) - \frac{1}{2}\log\det(AA^*).
\end{align*}
Since $\{\varphi_j\}$ is an orthonormal basis for $X$, the first determinant may be calculated as 
\[
\frac{1}{2}\log\det(A^*\mGamma A + A^*A\mC(\theta) A^*A) = \frac{1}{2}\sum_{j=1}^N \log(a_j^2\gamma^2 + a_j^4\mu_j(\theta)) = \frac{1}{2}\sum_{j=1}^N \log(a_j^2s_j^\gamma(\theta))
\]
and so
\[
\frac{1}{2}\log\det(\mGamma + A\mC(\theta) A^*)  \eqc -\frac{1}{2}\sum_{j=1}^N \log\frac{s_j^\gamma(\theta^\dagger)}{s_j^\gamma(\theta)}
\]
from which the result follows.
\end{proof}

\subsection{Convergence of Minimizers}
\label{ssec:CoM}

We study convergence of the minimizers of the random functionals $\JCN, \JNCN$ and $\JEN$ in the simultaneous limit $N\to \infty$ and $\gamma\to 0$. We establish that, if the noise level decays sufficiently fast relative to the 
smallest value of the product of the singular values and the prior
covariance, for the truncated problem, then the true hyperparameter is recovered in the cases of the centred MAP and empirical Bayes estimates. We also establish that it is not recovered in the case of the noncentred MAP estimate.

Let $\gamma_N > 0$ denote the noise level when $N$ observations are taken. We define $s_j^{\gN}(\theta) = a_j^2 \mu_j(\theta) + \gN^2$ as in \cref{prop:functionals}, and define
\[
b_j^N(\theta) = \frac{s_j^\gN(\theta^\dagger)}{s_j^\gN(\theta)}.
\]
In order to establish the convergence, we make the following assumptions.

\begin{assumptions}
\label{ass:conv}
We assume in what follows that:
\begin{enumerate}[(i)]
\item $\Theta\subseteq\R^k$ is compact.
\item $\underset{j=1,\ldots,N}{\min}\,a_j^2\mu_j(\theta)/\gamma_N^2 \to \infty$ as $N\to\infty$ for all $\theta \in \Theta$.
\item $g(\theta,\theta^\dagger) := \underset{j\to\infty}{\lim} \frac{\mu_j(\theta^\dagger)}{\mu_j(\theta)}$ exists for all $\theta \in \Theta$, and the map $\theta\mapsto g(\theta,\theta^\dagger) - \log g(\theta,\theta^\dagger)$ is lower semicontinuous.
\item If $g(\theta,\theta^\dagger) = 1$, then $\theta = \theta^\dagger$.
\item The maps $\theta \mapsto \log\mu_j(\theta) $ are Lipschitz on $\Theta$ for each $j \in \N$, with Lipschitz constants uniformly bounded in $j$.
\item The maps $\theta \mapsto b_j^N(\theta)$ are Lipschitz on $\Theta$ for each $j=1,\ldots,N$, $N\in\N$, with Lipschitz constants uniformly bounded in $j,N$.
\item The map $\theta \mapsto \log\rho_0(\theta)$ is Lipschitz on $\Theta$.
\end{enumerate}
\end{assumptions}

Assumption (i) is made to avoid complications with hyperparameter estimates potentially diverging. Assumption (ii) gives the rate at which the noise must decay relative to the decay of the singular values of the (whitened) forward map -- the more ill-posed the problem is, and the weaker the prior is, the faster the noise must vanish. Assumption (iii) allows a limiting functional to be identified, and (iv) is an identifiability assumption which allows us to identify the true hyperparameter. Assumptions (v)-(vii) are made to ensure the functionals $\JCNN,\JNCNN,\JENN$ are also Lipschitz with Lipschitz constants (almost surely) uniformly bounded in $N$; note that when combined with the assumed compactness 
of $\Theta$, we thus obtain existence of minimizers of these functionals over
$\Theta.$

\begin{remark}
\label{rem:Ngamma}
Instead of having the noise level $\gamma_N$ a function of the number of observations, we could also consider having the number of observations $N_\gamma$ as a function of the noise level -- this may be more appropriate in practice as one may not have control over the noise level. In this case, one would need to replace Assumption (ii) with
\begin{align}
\label{eq:wm_cond2}
\min_{j=1,\ldots,N_\gamma} a_j^2\mu_j(\theta)/\gamma\to \infty\text{ as }\gamma\to 0
\end{align}
in order to obtain analogous results. We work with $\gamma_N$ to make the arguments clearer: our sequences of functionals are indexed by a discrete rather than continuous parameter.
\end{remark}

\begin{theorem}
\label{thm:main}
Let \cref{ass:conv} hold, and let $\{\thetaCN\}, \{\thetaEN\}, \{\thetaNCN\}$ denote sequences of minimizers over $\Theta$ of $\{\JCNN\}$, $\{\JENN\}$, $\{\JNCNN\}$ respectively. 
\begin{enumerate}[(i)]
\item $\thetaCN, \thetaEN \to \theta^\dagger$ in probability as $N\to\infty$.
\item Assume further that $g(\cdot,\theta^\dagger)$ has a unique minimizer $\theta_*$. Then $\thetaNCN \to \theta_*$ in probability as $N\to\infty$.
\end{enumerate}
\end{theorem}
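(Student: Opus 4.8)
The plan is to treat all three estimators as M-estimators and invoke the standard argmin-consistency principle: if a sequence of random functionals converges uniformly in probability over the compact set $\Theta$ to a deterministic limit possessing a unique minimizer, then the minimizers converge in probability to that limit minimizer. The first task is therefore to identify the limiting functionals. Using the noncentred representation \cref{eq:data_nc} I write $(y_j^{\gamma_N})^2 = s_j^{\gamma_N}(\theta^\dagger)(\xi_j^\dagger)^2$, so that $(y_j^{\gamma_N})^2/s_j^{\gamma_N}(\theta) = b_j^N(\theta)(\xi_j^\dagger)^2$ with $(\xi_j^\dagger)^2 \iid \chi^2_1$ of unit mean. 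Substituting into \cref{eq:jcn,eq:jncn,eq:jen}, each functional becomes an average of the form $\frac{1}{2N}\sum_{j=1}^N b_j^N(\theta)(\xi_j^\dagger)^2$, together with (for the centred and empirical Bayes cases) a logarithmic correction and the term $-\frac1N\log\rho_0(\theta)$, which vanishes uniformly since $\log\rho_0$ is Lipschitz on the compact $\Theta$.

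Next I would establish pointwise convergence. Splitting the quadratic average into mean and fluctuation, $\frac{1}{2N}\sum_{j=1}^N b_j^N(\theta)(\xi_j^\dagger)^2 = \frac{1}{2N}\sum_{j=1}^N b_j^N(\theta) + \frac{1}{2N}\sum_{j=1}^N b_j^N(\theta)\big((\xi_j^\dagger)^2-1\big)$, I treat the two pieces separately. For the deterministic mean, Assumption~(ii) makes the $\gamma_N^2$ contributions to $b_j^N(\theta)$ uniformly negligible, reducing $b_j^N(\theta)$ to $\mu_j(\theta^\dagger)/\mu_j(\theta)$ up to an error controlled by $\gamma_N^2/\min_{j\le N}a_j^2\mu_j(\theta)\to 0$; Assumption~(iii) then gives $\mu_j(\theta^\dagger)/\mu_j(\theta)\to g(\theta,\theta^\dagger)$, so a Ces\`aro argument yields $\frac1N\sum_{j=1}^N b_j^N(\theta)\to g(\theta,\theta^\dagger)$. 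The logarithmic corrections, involving $\log(\mu_j(\theta^\dagger)/\mu_j(\theta))$ in the centred case and $\log b_j^N(\theta)$ in the empirical Bayes case, both average to $\log g(\theta,\theta^\dagger)$ by the same reasoning. For the fluctuation, the variance equals $\frac{1}{2N^2}\sum_{j=1}^N b_j^N(\theta)^2$, which is $O(1/N)$ because the $b_j^N(\theta)$ are bounded, so Chebyshev's inequality sends it to zero in probability. Collecting terms, the pointwise limits are $\tfrac12\big(g(\theta,\theta^\dagger)-\log g(\theta,\theta^\dagger)\big)$ for both $\JCNN$ and $\JENN$, and $\tfrac12\,g(\theta,\theta^\dagger)$ for $\JNCNN$.

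To upgrade pointwise to uniform convergence over $\Theta$ I would use the Lipschitz hypotheses (v)--(vii). Together with the law of large numbers $\frac1N\sum_{j=1}^N(\xi_j^\dagger)^2\to1$, they show that each random functional is Lipschitz on $\Theta$ with a random Lipschitz constant bounded in probability uniformly in $N$; the limiting functionals inherit the Lipschitz property and are in particular continuous. A standard net argument --- cover the compact $\Theta$ by finitely many small balls, apply pointwise convergence at their centres with a union bound, and control the oscillation inside each ball by the uniform Lipschitz bound --- then promotes the pointwise convergence to $\sup_{\theta\in\Theta}|\,\cdot\,|\to0$ in probability for all three functionals.

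Finally I would apply the argmin-consistency principle. The scalar map $x\mapsto x-\log x$ is uniquely minimized at $x=1$, so $\tfrac12(g-\log g)$ is minimized exactly where $g(\theta,\theta^\dagger)=1$, which by Assumption~(iv) occurs only at $\theta=\theta^\dagger$; this furnishes the unique minimizer for the centred and empirical Bayes limits and yields part~(i). For part~(ii) the limit is $\tfrac12\,g(\cdot,\theta^\dagger)$, whose unique minimizer is $\theta_*$ by the additional hypothesis, giving $\thetaNCN\to\theta_*$. The principle itself is elementary: writing $\bar\theta$ for the unique minimizer of the relevant limit $F$ and fixing $\epsilon>0$, the gap $\delta:=\inf_{|\theta-\bar\theta|\ge\epsilon}\big(F(\theta)-F(\bar\theta)\big)$ is strictly positive by continuity, compactness and uniqueness, and on the event $\{\sup_\Theta|F_N-F|<\delta/3\}$ every minimizer of $F_N$ lies within $\epsilon$ of $\bar\theta$; since this event has probability tending to one, the convergence in probability follows. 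I expect the main obstacle to be the uniform convergence step, specifically the joint control of the $\gamma_N\to0$ and $j\to\infty$ limits in the Ces\`aro average and the uniform-in-$\theta$ handling of the $\chi^2_1$ fluctuations; by comparison the identification of the minimizers via $x-\log x$ and the argmin principle are routine.
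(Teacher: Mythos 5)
Your proposal is correct and follows the same skeleton as the paper's proof: the same noncentred rewriting of the data, the same split of the quadratic average into its deterministic mean (handled by the Ces\`aro/\cref{lem:s_conv}-type argument using \cref{ass:conv}(ii),(iii)) and a mean-zero $\chi^2_1$ fluctuation, the same limiting functionals $\tfrac12(g-\log g)$ and $\tfrac12 g$, and the same identification of the minimizers via $x\mapsto x-\log x$ together with \cref{ass:conv}(iv). You diverge from the paper in two localized places, both in the direction of being more elementary and self-contained. First, for the fluctuation term $\frac{1}{2N}\sum_j b_j^N(\theta)(\zeta_j^2-1)$ the paper computes the characteristic function and deduces weak convergence to $\delta_0$, whereas you bound the variance by $\frac{1}{2N^2}\sum_j b_j^N(\theta)^2 = \mathcal{O}(1/N)$ and apply Chebyshev; your computation is correct (using $\mathrm{Var}(\chi^2_1)=2$ and the uniform boundedness of $b_j^N(\theta)$) and gives convergence in probability directly, so it is a genuine simplification of that step. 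Second, the paper passes from pointwise convergence plus equi-Lipschitz bounds to convergence of minimizers by citing van der Vaart--Wellner (Exercise 3.2.3 via Arzel\`a--Ascoli), whereas you prove the same implication by hand with a finite net plus the $\delta/3$ argmin argument; this buys self-containedness at the cost of a page of routine estimates, and requires only the lower semicontinuity of the limit that \cref{ass:conv}(iii) already supplies (your stronger claim that the limit is Lipschitz also holds, since it is a pointwise limit of equi-Lipschitz functions). No step of your outline would fail; the only point requiring the care you already flag is that $b_j^N$ depends on both $j$ and $N$, so the Ces\`aro limits for $\frac1N\sum_j b_j^N(\theta)$ and $\frac1N\sum_j\log b_j^N(\theta)$ must use the uniform-in-$j\le N$ control from \cref{ass:conv}(ii), exactly as in \cref{lem:s_conv}.
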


\begin{remark}
Recently, Knapik et al. \cite{knapik2016bayes} studied consistency of empirical maximum likelihood estimators for inverse problems. Like us, they consider a 
diagonalizable problem, but their analysis is confined to a single 
hyperparameter describing the regularity of the Gaussian prior, and to the 
empirical Bayes procedure only, not MAP estimation. However, in their
setting they can go further than in ours. 
Their main results in \cite[Thm. 1 and 2]{knapik2016bayes} show convergence 
{\em rates} of the empirical estimator, like us in probability, and they use
this to deduce that the empirical posterior on $u$ contracts around the ground 
truth at an optimal rate. Whereas we assume data to be generated 
according to $\P(y | \theta^\dagger)$, Knapik et al. consider $\P(y | u^\dagger)$ as the data generating distribution  and the function $u^\dagger$ 
implicitly identifies the true regularity $\theta^\dagger$.
\end{remark}

\begin{remark}
In general it is the case that $\theta_*\neq\theta^\dagger$, and so the result concerning the convergence of $\{\thetaNCN\}$ is a negative result: the true hyperparameter is not recovered.
\end{remark}

\begin{proof}[Proof of \cref{thm:main}]
We establish the result in full for $\JCNN$, and note the small modifications required to establish the results for $\JENN$ and $\JNCNN$. 
We start by proving item (i). We have
\[
\JCNN(\theta) = \frac{1}{2N} \sum_{j=1}^N \left[ \frac{(y_j^{\gN})^2}{s^{\gN}_j(\theta)} - \log \frac{\mu_j(\theta^\dagger)}{\mu_j(\theta)} \right] - \frac{1}{N}\log \rho_0(\theta).
\]
We rewrite $y_j^{\gN}$ using the representation \cref{eq:data_nc}:
\[
y_j^{\gN} \eqd \sqrt{s_j^{\gN}(\theta^\dagger)}\zeta_j,\quad\zeta_j \iid N(0,1),
\]
and so 
\[
\JCNN(\theta) \eqd \frac{1}{2N} \sum_{j=1}^N \left[ b_j^N(\theta)\zeta_j^2 - \log \frac{\mu_j(\theta^\dagger)}{\mu_j(\theta)} \right] - \frac{1}{N}\log \rho_0(\theta).
\]
We can see formally from the assumptions that, for each $\theta \in\Theta$, $b_j^N(\theta) \to g(\theta,\theta^\dagger)$ as $j,N\to\infty$, and so the strong law of large numbers suggests that
\[
\JCNN(\theta)\to \JC(\theta) := \frac{1}{2}g(\theta,\theta^\dagger) - \frac{1}{2}\log g(\theta,\theta^\dagger)
\]
almost surely. Observe that $\JC$ is minimized if and only if $g(\theta,\theta^\dagger) = 1$, which by \cref{ass:conv}(iv) occurs if and only if $\theta = \theta^\dagger$. We hence wish to establish convergence of the minimizers of $\JCNN$ to that of $\JC$.
In order to show this convergence, we use the approach of \cite{van1996weak}. Specifically we use the result of Exercise 3.2.3, which follows from Corollary 3.2.3(ii) and the Arzel\`a-Ascoli theorem. We must establish that:
\begin{enumerate}[(a)]
\item $\JCNN$ converges pointwise in probability to $\JC$;
\item the maps $\theta\mapsto \JCNN(\theta)$ are Lipschitz on $\Theta$ for each $N$, with (random) Lipschitz coefficients uniformly bounded in $N$ almost surely;
\item $\JC$ is lower semicontinuous with a unique minimum at $\theta^\dagger$; and
\item $\thetaCN = \mathcal{O}_P(1)$.
\end{enumerate}
The point (c) is true by assumption, and (d) follows since $\Theta$ is compact. To establish that point (a) holds, we note that it suffices to show that, in probability, for each $\theta \in \Theta$,
\begin{align}
\label{eqn:conv1}
\bigg|\frac{1}{2N}\sum_{j=1}^N b_j^N(\theta)(\zeta^2-1)\bigg| &\to 0\\
\label{eqn:conv2}
\bigg|\frac{1}{2N}\sum_{j=1}^N \left(b_j^N(\theta) - \log\frac{\mu_j(\theta^\dagger)}{\mu_j(\theta)}\right) - \frac{1}{N}\log\rho(\theta) - \JC(\theta)\bigg| &\to 0.
\end{align}
Note that the expression \cref{eqn:conv2} is deterministic. Define the map
\[
G_N(\theta) = \frac{1}{2N}\sum_{j=1}^N b_j^N(\theta)(\zeta_j^2-1).
\]
We show that $G_N(\theta) \to 0$ weakly for all $\theta \in \Theta$; since the limit is constant, the convergence then also occurs in probability. Combining \cref{lem:s_conv} with \cref{ass:conv}(ii),(iii), we see that
\begin{align}
\label{eq:conv_sum}
\frac{1}{N}\sum_{j=1}^N b_j^N(\theta) \to g(\theta,\theta^\dagger)
\end{align}
for each $\theta \in \Theta$. The proof of \cref{lem:s_conv} implies, in particular, that 
the sequence $\{b_j^N(\theta)\}_{j,N}$ is uniformly bounded for each $\theta$. Since $\zeta_j^2 \iid \chi_1^2$, we have that the characteristic function of $G_N(\theta)$ satisfies\footnote{Here $\log$ refers to the principal branch of the complex logarithm -- note that we are bounded away from the branch cut since the argument always has real part 1.}
\begingroup
\allowdisplaybreaks % Weird page breaking without this
\begin{align*}
\mathbb{E}\Big(\exp\big(itG_N(&\theta)\big)\Big) = \prod_{j=1}^N \mathbb{E}\left(\exp\left(it\cdot\frac{1}{2N} b_j^N(\theta)(\zeta_j^2-1)\right)\right)\\
&= \prod_{j=1}^N \left(1 - \frac{b_j^N(\theta)it}{N}\right)^{-\frac{1}{2}}\exp\left(-it\cdot\frac{1}{2N} b_j^N(\theta)\right)\\
&= \exp\left(-\frac{1}{2}\sum_{j=1}^N \left[\log\left(1 - \frac{b_j^N(\theta)it}{N}\right) + \frac{b_j^N(\theta)it}{N}\right]\right)\\
&= \exp\left(-\frac{1}{2}\sum_{j=1}^N \left[-\frac{b_j^N(\theta)it}{N} - \frac{1}{2
}\left(\frac{b_j^N(\theta)it}{N}\right)^2 - \mathcal{O}(N^{-3}) + \frac{b_j^N(\theta)it}{N}\right]\right)\\
&= \exp\left(-\frac{1}{4}\sum_{j=1}^N \left[\frac{b_j^N(\theta)^2t^2}{N^2} - \mathcal{O}(N^{-3})\right]\right).
\end{align*}
\endgroup
From the boundedness of $\{b_j^N(\theta)\}_{j,N}$, we deduce that the sum in the exponent tends to zero as $N\to\infty$. It follows that
\[
\mathbb{E}\left(\exp\left(itG_N(\theta)\right)\right) \to \exp(0) = \mathbb{E}^{Z\sim\delta_0}\left(\exp(itZ)\right)
\]
and so $G_N(\theta) \to 0$ weakly; the convergence \cref{eqn:conv1} follows. We now rewrite the expression in \cref{eqn:conv2} as 
\begin{align*}
\frac{1}{2N}&\sum_{j=1}^N \left(b_j^N(\theta) - \log\frac{\mu_j(\theta^\dagger)}{\mu_j(\theta)}\right) - \frac{1}{N}\log\rho_0(\theta) - \JC(\theta)\\
&= \frac{1}{2N} \sum_{j=1}^N \left(b_j^N(\theta) - g(\theta,\theta^\dagger)\right) - \frac{1}{2N}\sum_{j=1}^N \left(\log\frac{\mu_j(\theta^\dagger)}{\mu_j(\theta)} - \log g(\theta,\theta^\dagger)\right) - \frac{1}{N}\log\rho_0(\theta).
\end{align*}
The first sum vanishes as $N\to\infty$ due to the convergence \cref{eq:conv_sum}, the second vanishes due to \cref{ass:conv}(ii), and third clearly vanishes. The convergence \cref{eqn:conv2} follows, and hence so does the pointwise convergence in probability $\JCNN\to\JC$. It remains to show the Lipschitz condition (b). We have, for any $\theta_1,\theta_2 \in \Theta$,
\begin{align*}
|\JCNN(\theta_1) - \JCNN&(\theta_2)| \leq \frac{1}{2N} \sum_{j=1}^N |b_j^N(\theta_1) - b_j^N(\theta_2)|\zeta_j^2\\
&\hspace{0.3cm} + \frac{1}{2N}\sum_{j=1}^N|\log\mu_j(\theta_1) - \log\mu_j(\theta_2)| + \frac{1}{2}|\log\rho_0(\theta_1) - \log\rho_0(\theta_2)|.
\end{align*}
By \cref{ass:conv}(v)-(vii) the Lipschitz property follows. The almost sure boundedness of the Lipschitz constants follows from the strong law of large numbers, since the i.i.d. random variables $\zeta_j^2$ have finite second moments.

In the case of $\JENN$, the limiting functional is the same: $\JE = \JC$. The proof for convergence of minimizers differs only in the expression \cref{eqn:conv2}, wherein the logarithmic term in the sum is replaced by $\log b_j^N(\theta)$; this does not affect the convergence of the expression. 

We now study (ii). The functional $\JNCNN$ differs from $\JCNN$ only in
the absence of the logarithmic term -- it is easy to see that the limiting functional is then given by
\[
\JNCN(\theta) := \frac{1}{2}g(\theta,\theta^\dagger),
\]
and that the required conditions (a)--(d) above are satisfied, since existence of a unique minimizer $\theta_*$ of $g(\cdot,\theta^\dagger)$ is assumed. The same result from \cite{van1996weak} may then be used to obtain the stated result.
\end{proof}

\begin{remark}
\label{rem:equiv}
An important implication of this result is that the hyperparameters can only be determined up to measure equivalence. By the Feldman--H\'ajek theorem, the measures $N(0,\mC(\theta^\dagger))$ and $N(0,\mC(\theta))$ are equivalent if and only if
\[
\sum_{j=1}^\infty\left(\frac{\mu_j(\theta^\dagger)}{\mu_j(\theta)} - 1\right)^2 < \infty
\]
which in particular implies that the limit $g(\theta,\theta^\dagger)$ is
identically $1$. The limiting functional is hence minimized by any $\theta$ that gives rise to an equivalent measure.
\end{remark}

\begin{remark}
\label{rem:rescale}
In some situations the limiting functional $g(\theta,\theta^\dagger)$ is
infinite whenever $\theta\neq\theta^\dagger$. Even though this limit 
clearly identifies the true hyperparameters, \cref{thm:main} does not directly apply, since, for example, \cref{ass:conv}(iii),(vii) cannot hold. One approach to avoid this is to replace the objective functional $\JCNN(\theta)$ by $\JCNN(\theta)^{\eps_N}$ for some positive sequence $\eps_N\to 0$ -- note that this does
not affect the sequence of minimizers since $t\mapsto t^{\eps_N}$ is strictly increasing for all $N$. Such a sequence $\{\eps_N\}$ may be chosen in practice to be such that $(\mu_N(\theta^\dagger)/\mu_N(\theta))^{\eps_N}$ converges to a finite value for each $\theta$ as $N\to\infty$. Examples of situations where these infinite limits occur, and appropriate choices of sequences $\{\eps_N\}$ to obtain finite limits, are discussed in what follows.
\end{remark}

\begin{comment}
\todo{Finish/polish this section}
\begin{remark}
We may calculate
\[
(b_j^N)'(\theta) = -\frac{(\mu_j(\theta^\dagger) + \gN^2/a_j^2)\mu_j'(\theta)}{(\mu_j(\theta) + \gN^2/a_j^2)^2} = -b_j^N(\theta)\cdot\frac{\mu_j'(\theta)}{\mu_j(\theta) + \gN^2/a_j^2}.
\]
Thus if $\mu_j'(\theta)/\mu_j(\theta) \to h(\theta)$ as $j\to\infty$, we have
\[
(b_j^N)'(\theta) \to -g(\theta,\theta_*)h(\theta).
\]
as $j,N\to\infty$. \cref{ass:conv} can hence be checked by verifying that this limiting function is uniformly bounded on $\Theta$.
\end{remark}
\end{comment}

\subsection{Examples}
\label{ssec:E}

We now provide examples which elucidate \cref{thm:main}.

\begin{example}[Whittle--Mat\'ern]
\label{ex:wm2}
Consider the case where the conditional Gaussian priors are Whittle--Mat\'ern distributions on a bounded domain $D\subseteq\R^d$. As mentioned in \cref{ex:wm}, the covariance operators diagonalize in the eigenbasis $\{\varphi_j\}$ of the Laplacian on $D$. Since $D$ is bounded we simply {\em define} the Whittle--Matern process to have covariance given by the inverse of \eqref{eq:prec}, and
where we equip the Laplacian with Dirichlet, Neumann or
periodic boundary conditions; we note that for all three such sets of
boundary conditions, the eigenvalues $\lambda_j$ of the negative Laplacian
tend to infinity. We first consider the case where we are hierarchical about the standard deviation $\sigma$ and the length-scale $\ell$, and denote $\theta = (\sigma,\ell) \in \Theta$. Fixing the regularity parameter $\nu>0$, the eigenvalues are given by
\begin{align*}
\mu_j(\theta) &= \kappa(\nu)\sigma^2 \ell^{d}(1+\ell^2\lambda_j)^{-\nu-d/2}\\
&=\kappa(\nu)\sigma^2 \ell^{-2\nu}(\ell^{-2}+\lambda_j)^{-\nu-d/2}
\end{align*}
for some constant $\kappa(\nu)$. We may then calculate
\[
g(\theta,\theta^\dagger) = \lim_{j\to\infty} \left(\frac{\sigma^\dagger}{\sigma}\right)^2\left(\frac{\ell}{\ell^\dagger}\right)^{2\nu}\left(1 + \frac{\ell^{-2}-(\ell^\dagger)^{-2}}{(\ell^\dagger)^{-2}+\lambda_j}\right)^{\nu} = \left(\frac{\sigma^\dagger}{\sigma}\right)^2\left(\frac{\ell}{\ell^\dagger}\right)^{2\nu}.
\]
We then see that $g(\theta,\theta^\dagger) = 1$ if and only if\footnote{This condition is slightly weaker than that required for measure equivalence -- for the measures to be equivalent we require in addition that $d \leq 3$, see for example Theorem 1 in \cite{DIS16}.} $\sigma\ell^{-\nu} = \sigma^\dagger(\ell^\dagger)^{-\nu}$. This equality is satisfied by infinitely many pairs $(\sigma,\ell)$. In order to apply \cref{thm:main} we require that the equality is only 
satisfied by the true hyperparameters. Therefore, instead of attempting to infer the pair $(\sigma,\ell)$, we attempt to infer the pair $(\sigma,\beta) := (\sigma,\sigma\ell^{-\nu})$; this is closely
related to the discussion around the Ornstein--Uhlenbeck process in 
\cref{ex:OU}. We then have
\[
\mu_j(\theta) = \kappa(\nu)\beta^2\left(\left(\frac{\beta}{\sigma}\right)^{2/\nu}+\lambda_j\right)^{-\nu-d/2}
\]
which leads to
\[
g(\theta,\theta^\dagger) = \left(\frac{\beta^\dagger}{\beta}\right)^2.
\]
When $\sigma$ is fixed, by applying \cref{thm:main}, we can deduce that the parameter $\beta$ is identifiable using via the centred MAP and empirical Bayesian methods; the proof that the requisite assumptions are satisfied under appropriate conditions is provided in \cref{lem:wm}. In particular, assuming the algebraic decay $a_j \asymp j^{-a}$ and $\gamma_N \asymp N^{-w}$, \cref{ass:conv}(ii) is equivalent to
\begin{align}
\label{eq:wm_cond}
w > a + \frac{\nu}{d} + \frac{1}{2}.
\end{align}
We also see that the parameter $\beta$ is not identifiable via the noncentred MAP method, since $g(\cdot;\theta)$ is minimized by taking $\beta$ as large as possible. 

In the case where we are hierarchical about the regularity parameter $\nu$, the assumptions of \cref{thm:main} do not hold. Nonetheless, the limiting 
functional can still be formally calculated as
\[
\JC(\nu) = 
\begin{cases}
\infty & \nu \neq \nu^\dagger\\
1 & \nu = \nu^\dagger
\end{cases}
\]
which is clearly minimized if and only if $\nu = \nu^\dagger$. As discussed 
in \cref{rem:rescale}, we can rescale to obtain a finite limiting functional; 
in this case making the choice $\eps_N = 1/\log(1+\lambda_N)$ achieves this.
\end{example}

\begin{example}[Automatic Relevance Determination]
\label{ex:ARD2}
The Automatic Relevance Determination (ARD) kernel is typically defined by
\[
c(x,x';\theta) = \sigma^2\exp\left(-\frac{1}{2}\sum_{k=1}^d\left(\frac{x_k-x_k'}{\theta_k}\right)^2\right).
\]
This is the Green's function for the anisotropic heat equation at time $t = 1$:
\[
\frac{\partial u}{\partial t}(t,x) = \sum_{k=1}^d \theta_k^2\frac{\partial u^2}{\partial x_k^2}(t,x),\quad u(0,x) = \sigma^2\xi(x).
\]
The corresponding covariance operator is hence given by
\[
\mC(\theta) = \sigma^2\exp(\Delta_\theta) := \sigma^2\exp\left(-\sum_{k=1}^d \theta_k^2\frac{\partial^2}{\partial x_k^2}\right).
\]
On rectangular domains this family of operators is simultaneously diagonalizable under the Laplacian eigenbasis. For example, if $D = (0,1)^d$ and we impose
Dirichlet boundary conditions on the Laplacian, then the eigenvalues are given by
\[
\mu_{i_1,\ldots,i_d}(\theta) = \sigma^2\exp\left(-\pi^2\sum_{k=1}^d \theta_k^2i_k^2\right).
\]
The results we have concerning consistency are given in terms of eigenvalues indexed by a single index $j$ rather than a multi-index $(i_1,\ldots,i_d)$. Rather than consider a particular enumeration of the multi-indices, we instead aim to infer each hyperparameter $\theta_k$ individually by only sending $i_k\to\infty$ -- this amounts to taking a subset of the observations. The problem of inferring each $\theta_k$ is then essentially equivalent to inference of the length-scale parameter of squared exponential prior with $d=1$. 

Note that \cref{thm:main} does not apply in this case -- the limiting functional $\JC$ is infinite everywhere except for the true hyperparameter, as was the case when inferring the parameter $\nu$ in the previous example. Again, following \cref{rem:rescale}, we can rescale to obtain a finite objective function;
in this case making the choice $\eps_N = 1/N^2$ suffices. ARD versions of general Whittle--Mat\'ern covariances can also be obtained by replacing the negative Laplacian $-\Delta$ with its anistropic 
analogue $-\Delta_\theta$ within the precision operator. It can be verified that the requisite assumptions for \cref{thm:main} are satisfied in this case when $\nu < \infty$; the proof is almost identical to that of \cref{lem:wm} and is hence omitted for brevity.
\end{example}

\section{Numerical Experiments}
\label{sec:NE}
In this section we present a number of numerical experiments in order to both validate the theory presented, and illustrate how the theory may extend beyond what has been proven.
\Cref{ssec:deblur} introduces a diagonalizable deblurring problem which is considered in the subsequent subsections.
\Cref{ssec:trunc} looks at the behaviour of minimizers of $\JCN$ with and without the prior truncation, as discussed in \cref{rem:trunc}.
\Cref{ssec:rates} looks at the traces of the errors between the hyperparameter estimates, comparing the convergence rates between the different functionals.
\Cref{ssec:equivalent} considers the setup of \cref{ex:wm2}, wherein the variance and length-scale parameters are to be jointly inferred; the minimizers are confirmed numerically to lie on the curve of hyperparameters which give rise to equivalent measures. Finally, \cref{ssec:noise_decay} considers  settings that enable us to test whether the assumptions of the theory are sharp -- in particular we see that they appear sharp only for the centred MAP approach, with the empirical Bayes estimates appearing to be more robust with respect to noise.

\subsection{Deblurring Problem}
\label{ssec:deblur}
In this subsection we consider the case that the forward map is given by a linear blurring operator. Let $\{\varphi_j\}_{j=0}^\infty$ denote the cosine Fourier basis on $D = (0,1)$,
\[
\varphi_j(x) = \sqrt{2}\cos(\pi jx),
\]
and define $A:L^2(D)\to L^2(D)$ by
\[
\la Au, \varphi_j\ra =
\begin{cases}
j^{-2}\la u,\varphi_j\ra & j \geq 1\\
0 & j = 0.
\end{cases}
\]
Then the map $A$ may be viewed as the solution 
operator $f\mapsto u$ for the problem
\begin{align}
\label{eq:pde}
-\Delta u(x) = \pi^2 f(x) \quad\text{for $x \in D$},\quad u'(0) = u'(1) = 0,\quad \int_0^1 u(x)\,\dee x = 0.
\end{align}
It could equivalently viewed as a convolution operator, writing
\[
(Au)(x) = \int_0^1 G(x,x')u(x')\,\dee x'
\]
where $G(x,x')$ is the Green's function for the system \cref{eq:pde}. This choice of forward operator is convenient as it diaganalizes in the same basis as the Whittle--Mat\'ern covariance operators on $D$, which are what we use throughout this subsection. In \cref{fig:truth_blur} we show the true state $u^\dagger$ that we fix throughout this subsection, and its image $Au^\dagger$ under $A$. It is drawn from a Whittle--Mat\'ern distribution with parameters $\sigma^\dagger = \ell^\dagger = 1$, $\nu^\dagger = 3/2$. To be explicit, in the notation of \cref{sec:consistency}, we have
\[
a_j = 1/j^2,\quad \mu_j(\theta) = \sigma^2\ell^{-2\nu}(\pi^2 j^2 + \ell^{-2})^{-\nu}.
\]

\subsection{Prior Truncation}
\label{ssec:trunc}
We first provide some numerical justification for the truncation of the prior at the same level as the observations when using the centred parameterization, as discussed in \cref{rem:trunc}. We fix a maximum discretization level $N_{\max} = 10^5$, and look at the behaviour of minimizers of the two functionals
\begin{align*}
\JCNN(\theta) &\eqc \frac{1}{2}\sum_{j=1}^N \frac{(y_j^\gamma)^2}{s^\gamma_j(\theta)} -  \frac{1}{2}\sum_{j=1}^N \log \frac{\mu_j(\theta^\dagger)}{\mu_j(\theta)} - \log \rho_0(\theta),\\
\widetilde{\JCNN}(\theta) &\eqc \frac{1}{2} \sum_{j=1}^N \frac{(y_j^\gamma)^2}{s^\gamma_j(\theta)} -  \frac{1}{2}\sum_{j=1}^{N_{\max}} \log \frac{\mu_j(\theta^\dagger)}{\mu_j(\theta)} - \log \rho_0(\theta),
\end{align*}
as $N$ is increased. We consider a conditional Whittle--Mat\'ern prior, treating the inverse length-scale $\theta = \ell^{-1}$ as a hyperparameter, and set $\gN = 1/N^5$ so that \cref{eq:wm_cond} is satisfied. In \cref{fig:prior_trunc} we show how the errors between the estimated inverse length-scales and the truth compare between the two functionals as $N$ increases. It can be seen that the error for the truncated prior is bounded above by that for the full prior, as expected.

\subsection{Centred, Noncentred and Empirical Bayes}
\label{ssec:rates}
We now compare numerically the behaviour of optimizers of the three functionals $\JCNN,\JNCNN$ and $\JENN$, and verify that the conclusions of \cref{thm:main} hold. As above, we consider a conditional Whittle--Mat\'ern prior with the inverse length-scale $\theta = \ell^{-1}$ as a hyperparameter, and set $\gN = 1/N^5$. In \cref{fig:rates} we show how the errors between the three sequences of minimizers and the truth compare as $N$ increases. We see that the noncentred MAP error diverges, as expected: the limiting functional is given by
\[
\JNC(\theta) = \frac{1}{2}\left(\frac{\ell}{\ell^\dagger}\right)^3,
\]
which is minimized as $\ell^{-1}\to\infty$. The empirical Bayes and centred MAP errors both generally decrease as $N$ is increased, again as expected, with the empirical Bayes estimate slightly outperforming the centred MAP estimate for moderate $N$; the noncentred MAP estimator fails to converge.

Also in \cref{fig:rates} we show the same errors averaged over 1000 independent realizations of the truth $u^\dagger \sim N(0,\mC(\theta^\dagger))$ and noise $\eta \sim N(0,\mI)$, and the same behaviour is observed. A reason for the empirical Bayes estimate outperforming the noncentred MAP estimate for moderate $N$ may be that the terms in the summation in the functional \cref{eq:jen} taking the form $x_j - \log x_j$, rather than $x_j - \log x_j^\eps$ for some $x_j^\eps \approx x_j$ as in \cref{eq:jcn}, which is minimized by $x_j = 1$. For larger $N$ there is very little difference between the two functionals, since $\gN\to 0$.

\subsection{Equivalent Families of Measures}
\label{ssec:equivalent}
We now consider the same setup as the previous subsubsection, but treat both the inverse length-scale $\ell^{-1}$ and the standard deviation $\sigma$ as hyperparameters: $\theta = (\sigma,\ell^{-1})$. The resulting family of conditional prior measures are then equivalent along any curve $\{(\sigma,\ell^{-1})\,|\,\sigma\ell^{-\nu} = \mathrm{constant}\}$, as discussed in \cref{ex:wm2}, and so the hyperparameters cannot be identified beyond this curve. This is illustrated in \cref{fig:matern_sig_ell}. In the top row we plot the functional $\JCNN(\sigma,\ell)$ for $(\sigma,\ell^{-1}) \in (0,5)^2$, with $N$ increasing from left to right. In the bottom row we plot the sets
\[
\left\{(\sigma,\ell^{-1})\,\bigg|\, \ell \in \underset{\ell^{-1} \in (0,5)}{\argmin}\,\JCNN(\sigma,\ell)\right\},\quad \left\{(\sigma,\ell^{-1})\,\bigg|\, \sigma \in \underset{\sigma \in (0,5)}{\argmin}\,\JCNN(\sigma,\ell)\right\},
\]
along with the curve $\sigma\ell^{-\nu} = \sigma^\dagger(\ell^\dagger)^{-\nu}$, i.e. $g(\cdot,\theta^\dagger)^{-1}(1)$; the global minimizer (i.e. the intersection of these sets) is also shown as a green dot. We see that the sets of minimizers concentrate on the limiting curve $g(\cdot,\theta^\dagger)^{-1}(1)$ as $N$ is increased.

For reference, we also consider the same experiments, but working with the reparameterization $\theta = (\sigma,\beta)$ introduced in \cref{ex:wm2} so that $\beta$ should be identifiable. In \cref{fig:matern_sig_beta} we see that this is indeed the case, with the curves now concentrating on the line $\beta = \sigma^\dagger(\ell^\dagger)^{-\nu} = 1$ as $N$ is increased.

\subsection{Noise Decay Rate}
\label{ssec:noise_decay}
We choose here now to be hierarchical about just the inverse length-scale $\theta = \ell^{-1}$. In the theory we made the assumption \cref{ass:conv}(ii) concerning the decay rate of the forward map and covariance singular values versus the decay of the noise level. For Whittle--Mat\'ern priors, assuming the algebraic decay $a_j\asymp j^{-a}$ and $\gamma_N\asymp N^{-w}$, the required condition on $w$ for \cref{ass:conv}(ii) to hold is given by \cref{eq:wm_cond}. In the setup considered here, this translates to $w > 4$. We now investigate numerically whether this condition is sharp, making the three choices $\gN = N^{-w}$ for $w=3.5,4,4.5$. The resulting error traces are shown in \cref{fig:gammaN} for the centred MAP and empirical Bayesian methods. It appears that the condition is likely to be sharp for the centred optimization, given that convergence fails at the borderline case. However. For the empirical Bayesian optimization the condition does not appear to be necessary, with convergence occurring in all cases, suggesting it is a more stable estimator than the MAP.

In light of \cref{rem:Ngamma}, we also consider the same setup, but with a fixed noise level and increasing $N$. Making the choice $N_\gamma = \gamma^{-1/w}$, the condition on $w$, equivalent to \cref{eq:wm_cond2}, is the same as before. In \cref{fig:Ngamma} we show the errors for the choices $w=3.5.4,4.5$, and the same trends are observed as for \cref{fig:gammaN}.

\begin{figure}
\centering
\includegraphics[width=0.5\textwidth, trim=3cm 0cm 3cm 0cm, clip]{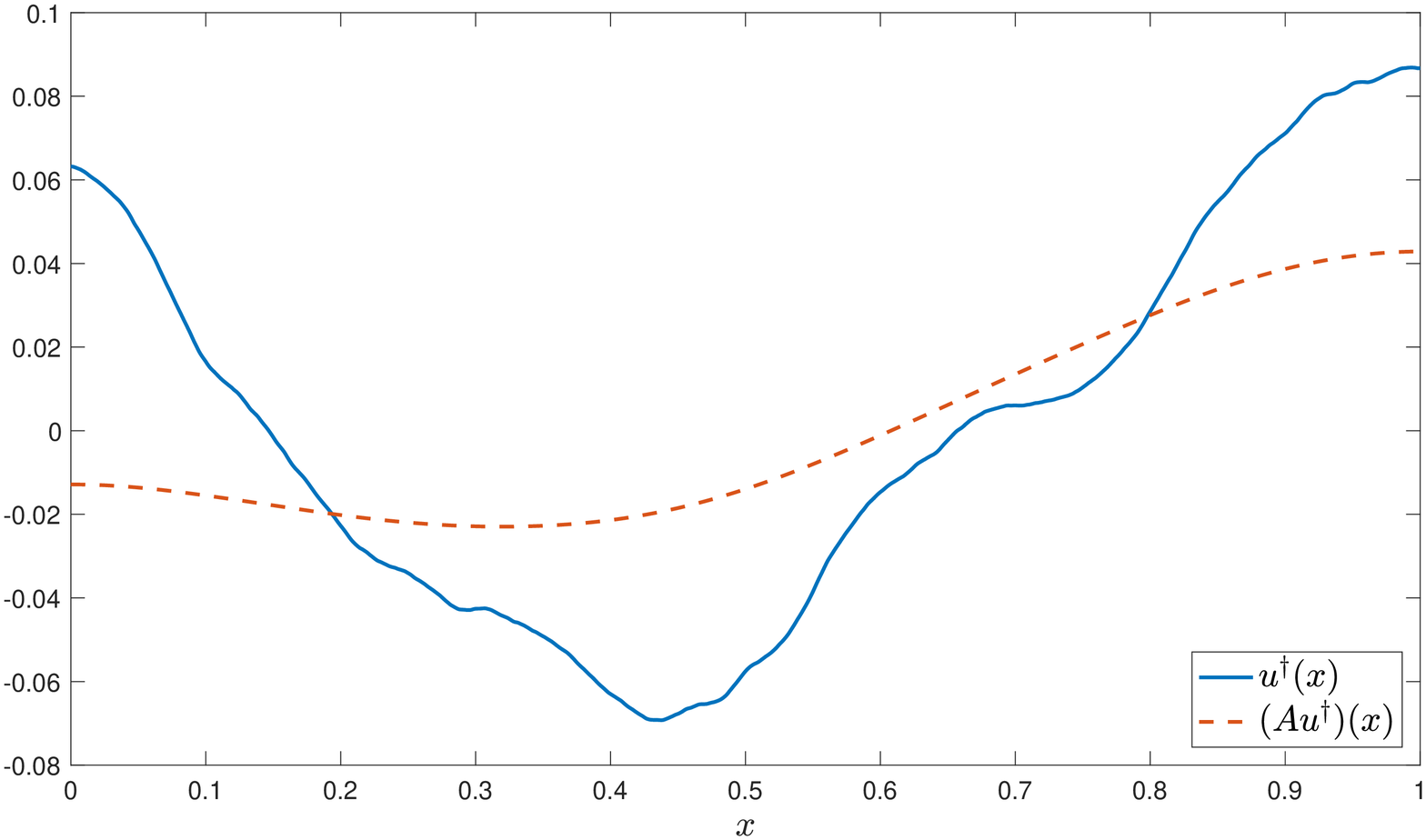}
\caption{The true state $u^\dagger$ used throughout \cref{ssec:deblur}, and its image $Au^\dagger$ under the blurring operator $A$.}
\label{fig:truth_blur}
\end{figure}

\begin{figure}
\centering
\includegraphics[width=0.55\textwidth, trim=1cm 0cm 1cm 0cm, clip]{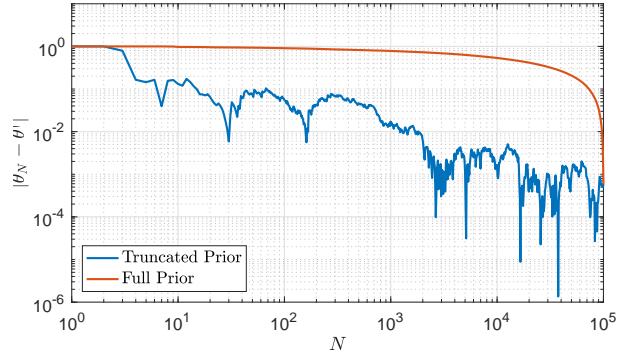}
\caption{The trace of the errors between the minimizers of $\JNCNN$ and $\widetilde{\JNCNN}$ and the true hyperparameter, as defined in \cref{ssec:trunc}, as $N$ is increased.}
\label{fig:prior_trunc}
\end{figure}

\begin{figure}
\centering
\includegraphics[width=\textwidth, trim=2cm 0cm 3.8cm 0cm, clip]{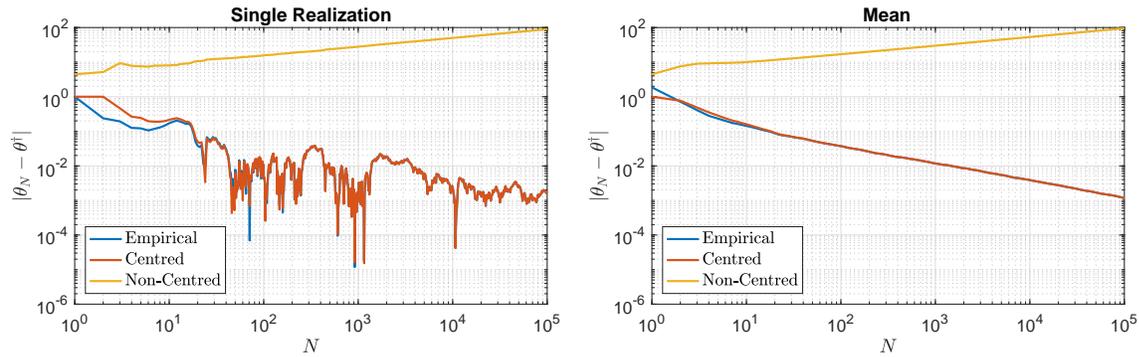}
\caption{Comparison of the errors between the minimizers of the three functionals $\JENN,\JCNN,\JNCNN$ and the true hyperparameter, as $N$ is increased. The left figure shows the error traces for a single realization of the truth and the noise, and the right figure shows the errors averaged over 1000 such realizations.}
\label{fig:rates}
\end{figure}

\begin{figure}
\centering
\includegraphics[width=\textwidth, trim=3cm 0cm 3cm 0cm, clip]{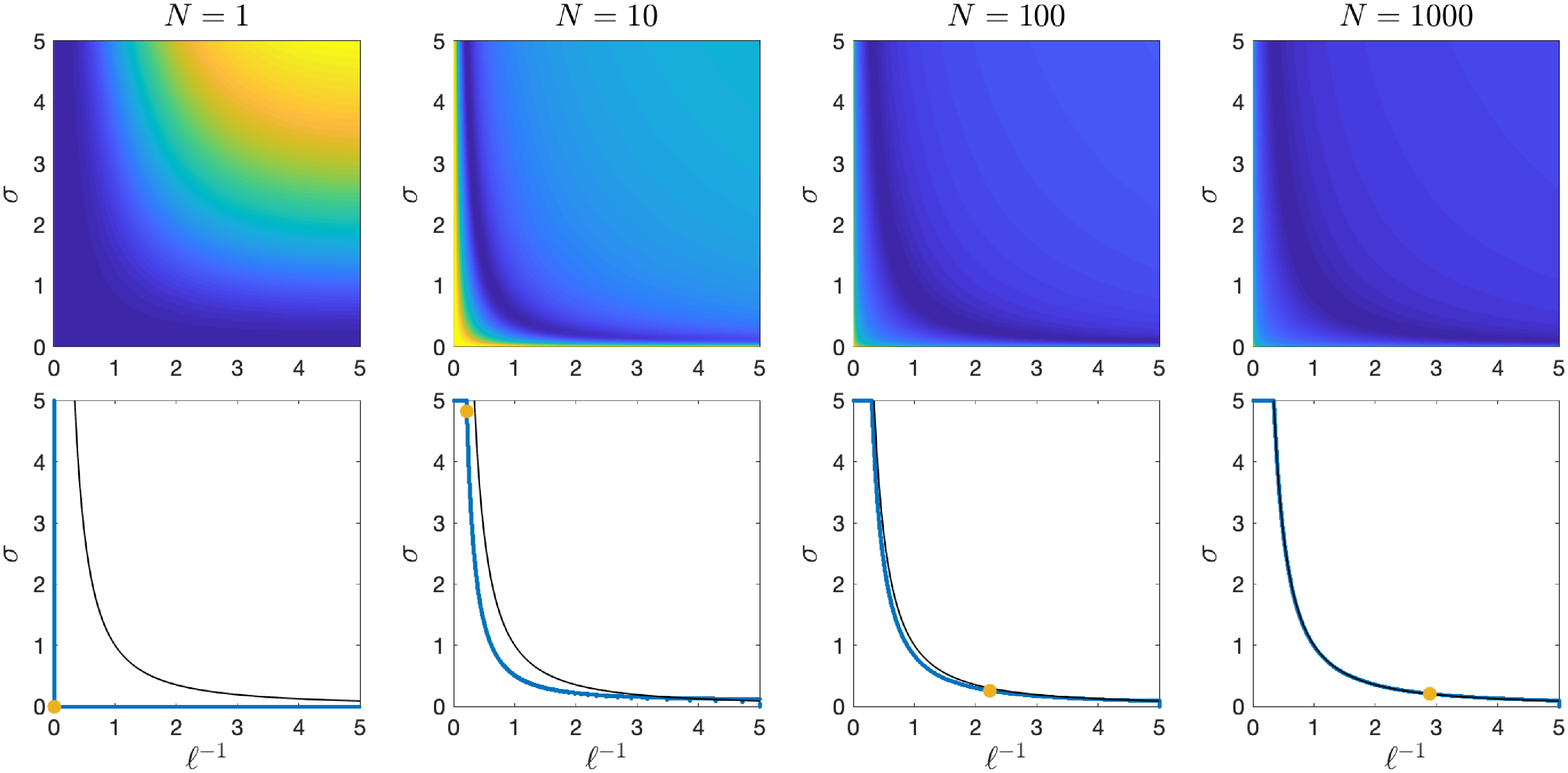}
\caption{(Top) The objective function $\JCNN(\sigma,\ell)$ for $N=1,10,100,1000$. (Bottom) The locations of the minimizers of each $\JCNN(\sigma,\ell)$ across each row and column of the computed approximations (blue), the curve of parameters that produce equivalent measures to the true parameter (black), and the global optimizer (green).}
\label{fig:matern_sig_ell}
\end{figure}

\begin{figure}
\centering
\includegraphics[width=\textwidth, trim=3cm 0cm 3cm 0cm, clip]{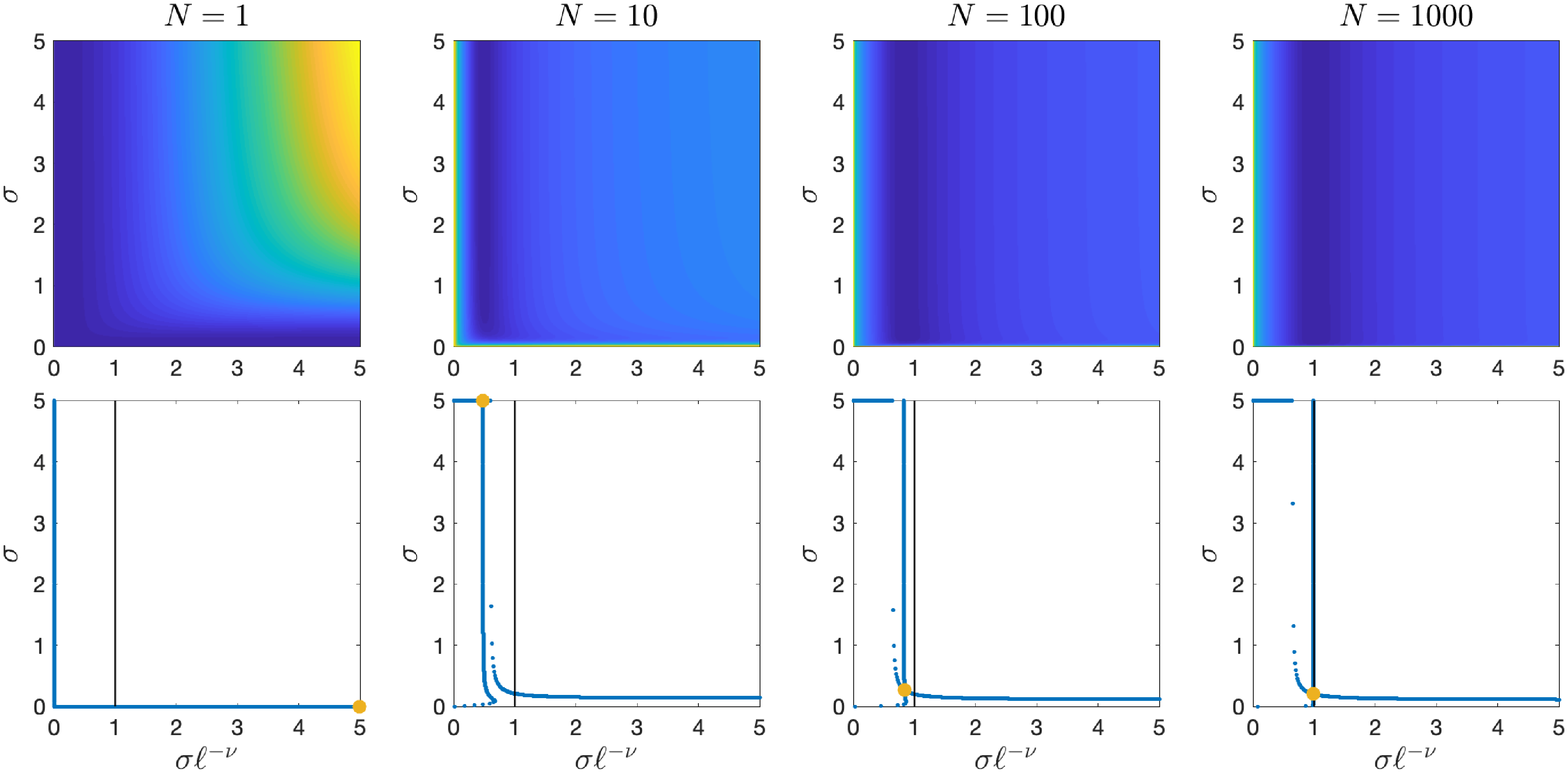}
\caption{(Top) The objective function $\JCNN(\sigma,\beta)$ for $N=1,10,100,1000$. (Bottom) The locations of the minimizers of each $\JCNN(\sigma,\beta)$ across each row and column of the computed approximations (blue), the curve of parameters that produce equivalent measures to the true parameter (black), and the global optimizer (green).}
\label{fig:matern_sig_beta}
\end{figure}

\begin{figure}
\centering
\includegraphics[width=\textwidth, trim=2cm 0cm 3.8cm 0cm, clip]{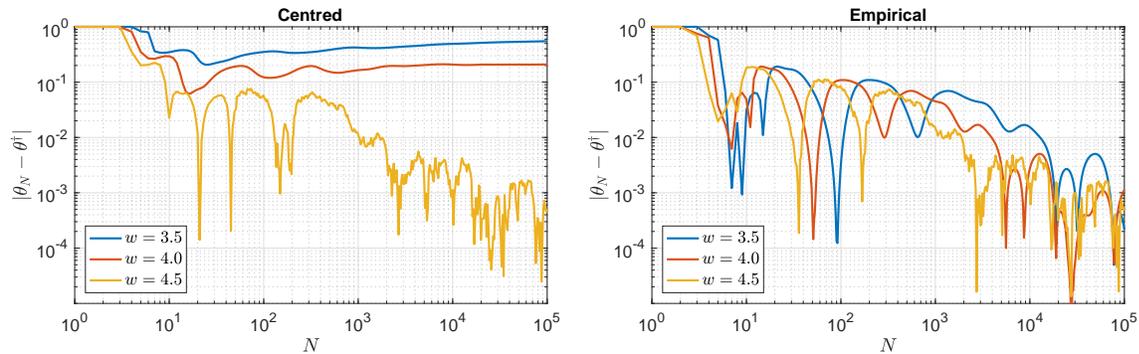}
\caption{Traces of the errors between optimizers of $\JCNN(\theta)$ (left), $\JENN(\theta)$ (right) and the true hyperparameter, as $N$ is increased. Here the noise level $\gamma_N$ is taken as $\gamma_N = N^{-w}$ for $w=3.5,4,4.5$.}
\label{fig:gammaN}
\end{figure}

\begin{figure}
\centering
\includegraphics[width=\textwidth, trim=2cm 0cm 3.8cm 0cm, clip]{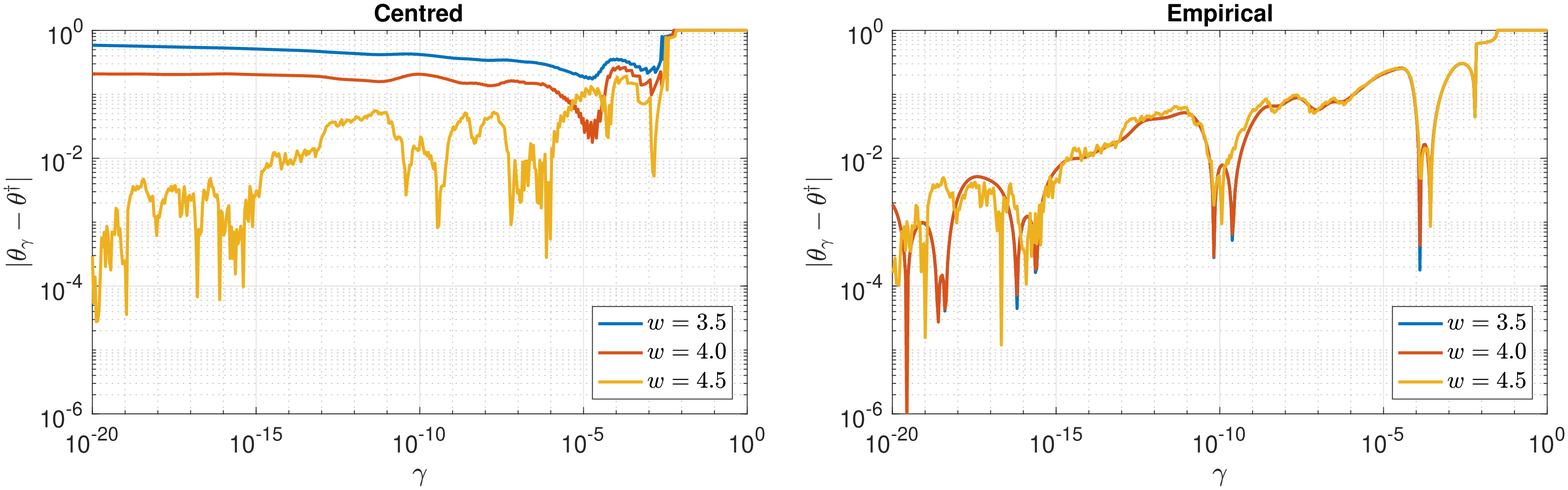}
\caption{Traces of the errors between optimizers of $\mathsf{J}_{\mathsf{C}}^{N_\gamma,\gamma}(\theta)$ (left), $\mathsf{J}_{\mathsf{E}}^{N_\gamma,\gamma}(\theta)$ (right) and the true hyperparameter, as $\gamma$ is decreased. Here the number of observations $N_\gamma$ is taken as $N_\gamma = \gamma^{-1/w}$ for $w=3.5,4,4.5$.}
\label{fig:Ngamma}
\end{figure}

\section{Conclusions}
\label{sec:C}

Learning hyperparameters in Bayesian hierarchical inference is important
in two main contexts: when the hyperparameters themselves are the primary
object of inference, and the underlying quantity which depends on them
{\em a priori} is viewed as a nuisance parameter; when the hyperparameters
themselves are not of direct interest, but choosing them carefully 
aids in inferring the underlying quantity which depends on them
{\em a priori}. In both settings it is of interest to understand when 
hyperparameters can be accurately inferred from data. In this paper
we have studied this question within the context of MAP estimation.
Our work suggests the benefits of using the centred parameterization
over the noncentred one, and also supports the use of empirical Bayes
procedures. This is interesting because the relative merits of  
centring and noncentring in this context differ from what is found
for sampling methods such as MCMC.

The theorem is confined to a straightforward situation, concerning
linear inverse problems, in which the
relevant operators are simultaneously diagonalizable. It also 
imposes conditions on the the parameters defining the problem;
numerical experiments indicate that these are sharp for the
centred MAP estimator, but not for the empirical Bayes
estimator, demonstrating that the latter is preferable.
It would also be
of interest to push the boundaries of the theory outside this regime
to the non-diagonal setting and even into nonlinear inverse problems.
It would also be of interest to study fully Bayesian posterior
inference for the hyperparameters, and Bernstein-von Mises theorems;
this may be related the re-scalings needed at the end of \Cref{ex:wm2,ex:ARD2}.

\vspace{0.1in}

\noindent{\bf Acknowledgements} The work of AMS and MMD is funded by
US National Science Foundation (NSF) grant DMS 1818977 and
AFOSR Grant FA9550-17-1-0185.

\bibliographystyle{siamplain}
\bibliography{main}

\pagebreak
\appendix
\section{Supporting Lemmas}
In this appendix we provide a number of lemmas that are used during proofs and examples in the main text.

\begin{lemma}
\label{lem:determinant}
Let $m \geq n$, $A \in \R^{m\times n}$ and $Q \in \R^{m\times m}$. Then
\[
\det(A^*QA) = \det(Q)\det(AA^*).
\]
\end{lemma}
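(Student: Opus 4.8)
The plan is to reduce the claim to the multiplicativity of the determinant, $\det(XY)=\det(X)\det(Y)$, together with the fact that $\det(A^*)=\det(A)$ for square matrices. When $A$ is square (that is, $m=n$, which is the case actually used below), the result is then essentially immediate: since $A^*QA$ is a product of three square matrices of the same size, I would write $\det(A^*QA)=\det(A^*)\det(Q)\det(A)$, and then commute the scalar factors and regroup to obtain $\det(Q)\det(A)\det(A^*)=\det(Q)\det(AA^*)$. No positivity or invertibility of $Q$ is needed, and the argument is three lines.

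The step I expect to require care, and the main obstacle, is the genuinely rectangular case $m>n$. Here $A^*QA$ is $n\times n$ whereas $AA^*$ is $m\times m$ of rank at most $n$, so $\det(AA^*)=0$; meanwhile $\det(A^*QA)$ need not vanish (take $Q=\mI$ and $A$ of full column rank). Thus, read with ordinary determinants, the stated equality forces $m=n$, and I would record that the lemma is invoked in the paper only with $A$ the truncated forward operator acting on $\R^N$, which is square and, since $a_j>0$, invertible. In that setting the square-case computation above is exactly what is needed, and one may additionally note $\det(AA^*)=\det(A)^2\ge 0$.

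If one nonetheless wishes to retain the rectangular formulation, I would factor $A=UR$ via a thin QR (or thin SVD) decomposition, with $U^*U=I$ and $R\in\R^{n\times n}$, so that $\det(A^*QA)=\det(R)^2\det(U^*QU)$. This presentation makes the rank obstruction for $m>n$ transparent and confirms that the asserted equality is precisely the $m=n$ instance, which is all that the application in \cref{prop:functionals} requires.
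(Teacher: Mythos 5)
Your square-case argument is correct and is more elementary than the paper's: the paper proves the lemma via the full singular value decomposition $A=U\Sigma V^*$, reducing $\det(A^*QA)$ to $\det(\Sigma^*U^*QU\Sigma)$ and then to $\det(Q)\det(\Sigma\Sigma^*)=\det(Q)\det(AA^*)$, whereas you need only multiplicativity of the determinant and $\det(A^*)=\det(A)$. For $m=n$ both routes work and the SVD buys nothing extra.

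More importantly, your objection to the rectangular case is well founded and exposes a genuine defect in the lemma as stated. For $m>n$ the matrix $A^*QA$ is $n\times n$ and can have nonzero determinant (e.g.\ $A=(1,0)^*$, $Q=\mI$ gives $\det(A^*QA)=1$), while $AA^*$ is $m\times m$ of rank at most $n$, so $\det(AA^*)=0$; the identity therefore fails whenever $A$ has full column rank, $Q$ is invertible and $m>n$. The paper's proof breaks exactly where you would predict: after computing $(\Sigma^*U^*QU\Sigma)_{ij}=\Sigma_{ii}(U^*QU)_{ij}\Sigma_{jj}$, it silently replaces the $n\times n$ matrix $\Sigma^*U^*QU\Sigma$ by the $m\times m$ matrix $\hat\Sigma U^*QU\hat\Sigma$ obtained by zero-padding $\Sigma$, and takes the determinant of the latter; the two determinants agree only when $m=n$. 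Your thin-factorization viewpoint gives the correct general statement, $\det(A^*QA)=\det(A^*A)\det(U_1^*QU_1)$ with $U_1$ an orthonormal basis for the range of $A$, which collapses to the asserted identity precisely in the square case. As you observe, the sole invocation of the lemma, in the proof of \cref{prop:functionals}, concerns the truncated operator on $\R^N$, which is square and invertible since $a_j>0$, so no downstream result is affected; but the hypothesis $m\geq n$ in the lemma should read $m=n$.
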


\begin{proof}
Let $A = U\Sigma V^*$ be the singular value decomposition of $A$, with $U \in \R^{m\times m}$, $V \in \R^{n\times n}$ unitary, and $\Sigma \in \R^{m\times n}$. Then we have
\[
\det(A^*QA) = \det(V\Sigma^*U^*QU\Sigma V^*) = \det(\Sigma^*U^*QU\Sigma)\det(V^*V) = \det(\Sigma^*U^*QU\Sigma).
\]
We have that
\begin{align*}
(\Sigma^*U^*QU\Sigma)_{ij} &= 
\begin{cases}
\Sigma_{ii}(U^*QU)_{ij}\Sigma_{jj} & i,j \leq m\\
0 & i> m\text{ or } j > m
\end{cases}\\
&= (\hat{\Sigma}U^*QU\hat{\Sigma})_{ij}
\end{align*}
where $\hat{\Sigma} \in \R^{m\times m}$ is given by $\hat{\Sigma}_{ij} = \Sigma_{ij}$. Since all matrices are now square, we see that
\begin{align*}
\det(\Sigma^*U^*QU\Sigma) &= \det(\hat\Sigma U^*QU\hat\Sigma)\\
&= \det(Q)\det(U^*U)\det(\hat\Sigma^2)\\
&= \det(Q)\det(\Sigma\Sigma^*)\\
&= \det(Q)\det(AA^*).
\end{align*}
\end{proof}

\begin{lemma}
\label{lem:s_conv}
Let $\{a_j\},\{\mu_j\},\{\bar\mu_j\}$ and $\{\gamma_j\}$ be positive sequences with $\bar\mu_j/\mu_j \to g \geq 0$. Then
if 
\[
\min_{j=1,\ldots,N} \frac{a_j^2\mu_j}{\gamma_N^2} \to \infty\quad\text{as }N\to\infty
\]
we have
\[
\frac{1}{N}\sum_{j=1}^N \frac{a_j^2\bar\mu_j + \gamma_N^2}{a_j^2\mu_j + \gamma_N^2} \to g\quad\text{as }N\to\infty.
\]
\end{lemma}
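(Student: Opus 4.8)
The plan is to reduce the double-indexed average to an honest Ces\`aro mean plus a uniformly small remainder. First I would introduce the abbreviations $g_j := \bar\mu_j/\mu_j$ and $\epsilon_j^N := \gamma_N^2/(a_j^2\mu_j)$, so that each summand becomes
\[
\frac{a_j^2\bar\mu_j + \gamma_N^2}{a_j^2\mu_j + \gamma_N^2} = \frac{g_j + \epsilon_j^N}{1 + \epsilon_j^N}.
\]
The key observation is that the hypothesis $\min_{j\le N} a_j^2\mu_j/\gamma_N^2 \to \infty$ is exactly the statement that $\epsilon_N := \max_{j\le N}\epsilon_j^N \to 0$ as $N\to\infty$; that is, the perturbations $\epsilon_j^N$ are small \emph{uniformly} over $j\in\{1,\dots,N\}$.

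Next I would perform the algebraic splitting
\[
\frac{g_j+\epsilon_j^N}{1+\epsilon_j^N} = g_j + \frac{\epsilon_j^N(1-g_j)}{1+\epsilon_j^N},
\]
so that
\[
\frac{1}{N}\sum_{j=1}^N \frac{a_j^2\bar\mu_j + \gamma_N^2}{a_j^2\mu_j + \gamma_N^2} = \frac{1}{N}\sum_{j=1}^N g_j + \frac{1}{N}\sum_{j=1}^N \frac{\epsilon_j^N(1-g_j)}{1+\epsilon_j^N}.
\]
For the first sum, since $g_j\to g$ by hypothesis, the Ces\`aro mean $\tfrac1N\sum_{j=1}^N g_j$ converges to the same limit $g$; note that this term carries no dependence on $N$ beyond the averaging index, so standard Ces\`aro summation applies directly. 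For the remainder, I would use that a convergent sequence is bounded, say $0< g_j \le M$, together with $1+\epsilon_j^N\ge 1$ and $\epsilon_j^N\le\epsilon_N$, to bound each term by $\epsilon_N(1+M)$; averaging then gives
\[
\left|\frac{1}{N}\sum_{j=1}^N \frac{\epsilon_j^N(1-g_j)}{1+\epsilon_j^N}\right| \le \epsilon_N(1+M)\to 0.
\]
Combining the two pieces yields the claimed convergence to $g$.

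The argument is essentially elementary, so there is no deep obstacle; the one point that requires care is the \emph{double-indexed} structure of the summand, which prevents a direct appeal to Ces\`aro summation on the full ratio. The decomposition above is designed precisely to isolate all of the $N$-dependence into the remainder term, whose control rests entirely on the uniform smallness $\max_{j\le N}\epsilon_j^N\to 0$ supplied by the hypothesis. Were $\epsilon_j^N$ merely small for each fixed $j$ rather than uniformly over $j\le N$, the remainder could fail to vanish, so translating the assumption $\min_{j\le N} a_j^2\mu_j/\gamma_N^2\to\infty$ into uniform control of the perturbations is the crux of the proof.
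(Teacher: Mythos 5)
Your proposal is correct and is essentially the same argument as the paper's: the splitting $\frac{g_j+\epsilon_j^N}{1+\epsilon_j^N} = g_j + \frac{\epsilon_j^N(1-g_j)}{1+\epsilon_j^N}$ is, after clearing notation, identical to the paper's decomposition into $\frac{\bar\mu_j}{\mu_j} + \frac{1}{a_j^2\mu_j/\gamma_N^2+1}\bigl(1-\frac{\bar\mu_j}{\mu_j}\bigr)$, and both proofs then combine a Ces\`aro mean for the first piece with a uniform-in-$j$ bound on the remainder coming from the $\min$ hypothesis.
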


\begin{proof}
We write
\begin{align*}
\frac{a_j^2\bar\mu_j + \gamma_N^2}{a_j^2\mu_j + \gamma_N^2} &= \frac{\bar\mu_j + \gamma_N^2/a_j^2}{\mu_j + \gamma_N^2/a_j^2}\\
&= \frac{\bar\mu_j}{\mu_j} + \left(\frac{\bar\mu_j + \gamma_N^2/a_j^2}{\mu_j + \gamma_N^2/a_j^2}- \frac{\bar\mu_j}{\mu_j}\right)\\
&= \frac{\bar\mu_j}{\mu_j} + \frac{\gamma_N^2/a_j^2(\mu_j-\bar\mu_j)}{\mu_j^2 + \gamma_N^2/a_j^2\mu_j}\\
&= \frac{\bar\mu_j}{\mu_j} + \frac{1}{a_j^2\mu_j/\gamma_N^2 + 1}\left(1-\frac{\bar\mu_j}{\mu_j}\right).
\end{align*}
Now observe that
\begin{align*}
\left|\frac{1}{N}\sum_{j=1}^N \frac{a_j^2\bar\mu_j + \gamma_N^2}{a_j^2\mu_j + \gamma_N^2} - g\right| &\leq \left|\frac{1}{N}\sum_{j=1}^N \left(\frac{a_j^2\bar\mu_j + \gamma_N^2}{a_j^2\mu_j + \gamma_N^2} - \frac{\bar\mu_j}{\mu_j}\right)\right| + \left|\frac{1}{N}\sum_{j=1}^N\frac{\bar\mu_j}{\mu_j}-g\right|.
\end{align*}
The second term on the right hand side tends to zero by the assumed convergence. From the above, the first term is equal to
\begin{align*}
\left|\frac{1}{N}\sum_{j=1}^N \frac{1}{a_j^2\mu_j/\gamma_N^2 + 1}\left(1-\frac{\bar\mu_j}{\mu_j}\right)\right| &\leq \max_{j=1,\ldots,N} \frac{1}{a_j^2\mu_j/\gamma_N^2 + 1}\left|1-\frac{\bar\mu_j}{\mu_j}\right|\\
&\leq C \left(\min_{j=1,\ldots,N}a_j^2\mu_j/\gamma_N^2 + 1\right)^{-1}
\end{align*}
again using the assumed convergence of the ratio $\bar\mu_j/\mu_j$; the result follows.

\end{proof}

In the following, given two sequence $\{a_j\},\{b_j\}$, we write $a_j\asymp b_j$ if there exist constants $c_1,c_2 > 0$ such that $c_1a_j \leq b_j \leq c_2a_j$ for all $j$.
\begin{lemma}
\label{lem:wm}
Let $\Theta = [\beta_-,\beta_+] \subseteq (0,\infty)$. Given $\nu,\sigma > 0$, $d \in \N$ and a positive sequence $\lambda_j \asymp j^{2/d}$ define
\[
\mu_j(\beta) = \beta^2\left(\left(\frac{\beta}{\sigma}\right)^{2/\nu} + \lambda_j\right)^{-\nu-d/2}.
\]
Assume that $a_j \asymp j^{-a}$ and $\gamma_N \asymp N^{-w}$, where $w,a>0$ are such that
\[
w > a + \frac{\nu}{d} + \frac{1}{2}.
\]
Then \cref{ass:conv}(i)-(vi) hold.
\end{lemma}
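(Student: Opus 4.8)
The plan is to verify conditions (i)--(vi) of \cref{ass:conv} in turn, with $\theta = \beta$ a scalar ranging over the compact interval $\Theta = [\beta_-,\beta_+]$. Condition (i) is immediate, a closed bounded interval in $\R$ being compact. The single computation underpinning everything else is the large-$j$ behaviour of the eigenvalues: since $\lambda_j \asymp j^{2/d}\to\infty$ while $(\beta/\sigma)^{2/\nu}$ stays bounded for $\beta\in\Theta$, one has $(\beta/\sigma)^{2/\nu}+\lambda_j \asymp j^{2/d}$ uniformly in $\beta$, whence
\[
\mu_j(\beta) \asymp \beta^2 \lambda_j^{-\nu-d/2} \asymp j^{-2\nu/d - 1},
\]
with constants independent of $\beta\in\Theta$. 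First I would record this two-sided estimate, since conditions (ii)--(iv) all follow from it.

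For (ii), combining the above with $a_j \asymp j^{-a}$ and $\gN \asymp N^{-w}$ gives $a_j^2\mu_j(\beta)/\gN^2 \asymp j^{-2a - 2\nu/d - 1}N^{2w}$. The exponent of $j$ is negative, so this is decreasing in $j$ and its minimum over $j=1,\dots,N$ is attained, up to constants, at $j=N$, yielding
\[
\min_{j=1,\dots,N} \frac{a_j^2\mu_j(\beta)}{\gN^2} \asymp N^{2w - 2a - 2\nu/d - 1},
\]
which tends to infinity precisely when $w > a + \nu/d + 1/2$; this is the standing hypothesis, so (ii) holds and is sharp. For (iii), the ratio $\mu_j(\beta^\dagger)/\mu_j(\beta)$ factors as $(\beta^\dagger/\beta)^2$ times the $(\nu+d/2)$-th power of $\big((\beta/\sigma)^{2/\nu}+\lambda_j\big)/\big((\beta^\dagger/\sigma)^{2/\nu}+\lambda_j\big)$, and since the latter ratio tends to $1$ as $\lambda_j\to\infty$ we obtain $g(\beta,\beta^\dagger) = (\beta^\dagger/\beta)^2$. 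This is continuous on $\Theta$ (as $\beta_->0$), so $\beta\mapsto g-\log g$ is continuous, in particular lower semicontinuous. Condition (iv) is then immediate: $(\beta^\dagger/\beta)^2 = 1$ forces $\beta = \beta^\dagger$ by positivity.

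The Lipschitz conditions (v) and (vi) are where the real work lies; the hard part will be bounding the relevant derivatives \emph{uniformly} in $j$ and $N$. For (v) I would differentiate $\log\mu_j(\beta) = 2\log\beta - (\nu + d/2)\log\!\big((\beta/\sigma)^{2/\nu} + \lambda_j\big)$ to get
\[
\frac{\dee}{\dee\beta}\log\mu_j(\beta) = \frac{2}{\beta} - \frac{2(\nu+d/2)}{\nu\beta}\cdot\frac{(\beta/\sigma)^{2/\nu}}{(\beta/\sigma)^{2/\nu}+\lambda_j}.
\]
The crucial observation is that the $j$-dependence enters only through the factor $(\beta/\sigma)^{2/\nu}/\big((\beta/\sigma)^{2/\nu}+\lambda_j\big)\in[0,1]$, so the derivative is bounded by $L := 2/\beta_- + 2(\nu+d/2)/(\nu\beta_-)$ uniformly in $j$ and $\beta$, giving (v) via the mean value theorem. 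For (vi), writing $b_j^N(\beta) = \big(a_j^2\mu_j(\beta^\dagger)+\gN^2\big)/\big(a_j^2\mu_j(\beta)+\gN^2\big)$ and using $\mu_j' = \mu_j\cdot(\log\mu_j)'$, I would compute
\[
\frac{\dee}{\dee\beta}b_j^N(\beta) = -\,b_j^N(\beta)\cdot\frac{a_j^2\mu_j(\beta)}{a_j^2\mu_j(\beta)+\gN^2}\cdot\frac{\dee}{\dee\beta}\log\mu_j(\beta).
\]
Here the middle factor lies in $[0,1]$ and the last is bounded by $L$, so it remains only to bound $b_j^N$ itself uniformly. This follows from a sandwich argument: a fraction $(p+c)/(q+c)$ with $p,q,c>0$ always lies between $p/q$ and $1$, so $b_j^N(\beta)$ is squeezed between $1$ and $\mu_j(\beta^\dagger)/\mu_j(\beta)$, and the latter is uniformly bounded in $j,\beta$ by the estimate recorded at the outset. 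Combining these bounds yields a Lipschitz constant for $b_j^N$ uniform in $j$ and $N$, establishing (vi). (Condition (vii), concerning $\log\rho_0$, is a hypothesis on the hyperprior, not part of this claim.)
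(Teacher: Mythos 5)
Your proof is correct and follows essentially the same route as the paper's: verify (i)--(vi) in turn, with the minimum in (ii) attained at $j=N$ giving the exponent $2(w-a-\nu/d-1/2)$, the limit $g(\beta,\beta^\dagger)=(\beta^\dagger/\beta)^2$ for (iii)--(iv), and uniform bounds on $(\log\mu_j)'$ and on $b_j^N$ times that derivative for (v)--(vi). The only cosmetic difference is that you bound $b_j^N$ by sandwiching $(p+c)/(q+c)$ between $1$ and $p/q$, whereas the paper splits it into $\mu_j(\beta^\dagger)/\mu_j(\beta)+\gamma_N^2/(a_j^2\mu_j(\beta))$ and invokes part (ii) for the second term; both yield the required uniform bound.
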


\begin{proof}
\begin{enumerate}[(i)]
\item This is true by assumption.
\item We assume without loss of generality that $a_j,\lambda_j$ are monotonically decreasing. Then 
\begin{align*}
\min_{j=1,\ldots,N}\frac{a_j^2\mu_j(\beta)}{\gamma_N^2} = \frac{a_N^2\mu_N(\beta)}{\gamma_N^2}.
\end{align*}
We may bound the right hand side as
\begin{align*}
\frac{a_N^2\mu_N(\beta)}{\gamma_N^2} \asymp N^{2(w-a)} \beta^2\left(\left(\frac{\beta}{\sigma}\right)^{2/\nu} + \lambda_N\right)^{-\nu-d/2} \asymp N^{2(w-a-\nu/d-1/2)},
\end{align*}
which diverges given the assumption on the parameters.
\item In \cref{ex:wm2} it is demonstrated that
\[
g(\beta,\beta^\dagger) = \lim_{j\to\infty} \frac{\mu_j(\beta^\dagger)}{\mu_j(\beta)} = \left(\frac{\beta^\dagger}{\beta}\right)^2
\]
for all $\beta \in \Theta$. The map $g(\beta,\beta^\dagger)-\log g(\beta,\beta^\dagger)$ is clearly continuous on $\Theta$, and so in particular lower semicontinuous. 
\item This is clearly true.
\item We have that
\[
\log\mu_j(\beta) = 2\log\beta - \left(\nu+\frac{d}{2}\right)\log\left(\left(\frac{\beta}{\sigma}\right)^{2/\nu} + \lambda_j\right)
\] 
which is smooth on $\Theta$, and so
\begin{align*}
\left|\frac{\dee}{\dee\beta}\log\mu_j(\beta)\right| &= \left|\frac{2}{\beta} - \left(\nu+\frac{d}{2}\right)\frac{2}{\nu}\left(\frac{\beta}{\sigma}\right)^{2/\nu-1}\left(\left(\frac{\beta}{\sigma}\right)^{2/\nu} + \lambda_j\right)^{-1}\right|\\
&\leq \frac{2}{\beta_-} + \left(\nu+\frac{d}{2}\right)\frac{2}{\nu}\frac{\sigma}{\beta_-}.\end{align*}
It follows that $\log\mu_j(\beta)$ is Lipschitz with Lipschitz constants bounded in $j$.
\item The map $b_j^N(\beta)$ is smooth on $\Theta$, and we have that
\[
|(b_j^N)'(\beta)| = \left|b_j^N(\beta)\frac{\mu_j'(\beta)}{\mu_j(\beta) + \gamma_N^2/a_j^2}\right| \leq \left|b_j^N(\beta)\right|\left|\frac{\mu_j'(\beta)}{\mu_j(\beta)}\right| =  \left|b_j^N(\beta)\right|\left|\frac{\dee}{\dee\beta}\log\mu_j(\beta)\right|,
\]
and the final term on the right hand side is uniformly bounded by part (v). Finally observe that
\[
|b_j^N(\theta)| \leq \frac{\mu_j(\beta^\dagger)}{\mu_j(\beta)} + \frac{\gamma_N^2}{a_j^2\mu_j(\beta)}.
\]
The first term can be seen to be uniformly bounded by noting that $\beta_->0$, and the second term by using part (ii). The map $\beta_j^N(\beta)$ is hence Lipschitz with Lipschitz constants bounded in $j,N$.
\end{enumerate}
\end{proof}

%
%\section{Questions}
%
%\begin{itemize}
%	\item[(1)] Infinite-dimensional noise, finite-dimensional projection of $U$: is the MAP estimator well-defined in this case?
%	\item[(2)] $\Gamma$-convergence of the random(!) functions in \eqref{eq:def_Jhat} is needed to show that minimizers converge?
%	\item[(3)] Non-diagonal forward operator?
%\end{itemize}

\end{document}